\newtheorem{theorem}{Theorem}
\newtheorem{corollary}{Corollary}
\newtheorem{definition}{Definition}
\newtheorem*{definition*}{Definition}
\newtheorem{remark}{Remark}
\newtheorem*{problem*}{Problem}
\numberwithin{equation}{section}
\numberwithin{table}{section}
\numberwithin{figure}{section}
\DeclareMathOperator{\argmin}{argmin}
\newcommand {\mat}  [1] {\left[\begin{array}{#1}}
\newcommand {\rix}      {\end{array}\right]}
\def\real{\mathop{\mathrm{Re}}}
\newcommand{\eproof}{\space
    {\ \vbox{\hrule\hbox{\vrule height1.3ex\hskip0.8ex\vrule}\hrule}}\par}
\def \R{{\mathbb R}}
\def \C{{\mathbb C}}
\title{Finding the nearest positive-real system}
\date{}
\author{Nicolas Gillis$^*$ \qquad Punit Sharma\thanks{The authors acknowledge the support of the ERC (starting grant n$^\text{o}$ 679515).
NG also acknowledges the support of the F.R.S.-FNRS (incentive grant for scientific research n$^\text{o}$ F.4501.16). Email: \{nicolas.gillis, punit.sharma\}@umons.ac.be.} \\ % \thanks{xxx}
Department of Mathematics and Operational Research \\
Facult\'e Polytechnique, Universit\'e de Mons \\
Rue de Houdain 9, 7000 Mons, Belgium\\
 nicolas.gillis@umons.ac.be, punit.sharma@umons.ac.be
}
\begin{document}

\maketitle

\begin{abstract}
The notion of positive realness for linear time-invariant (LTI) dynamical systems, equivalent to passivity, is one of the oldest in system and control theory. In this paper, we consider the problem of finding the nearest positive-real (PR) system to a non PR system: given an LTI  control system defined by $E \dot{x}=Ax+Bu$ and $y=Cx+Du$, minimize the Frobenius norm of $(\Delta_E,\Delta_A,\Delta_B,\Delta_C,\Delta_D)$ such that $(E+\Delta_E,A+\Delta_A,B+\Delta_B,C+\Delta_C,D+\Delta_D)$ is a PR system.
We first show that a system is extended strictly PR if and only if it can be written as a strict port-Hamiltonian system. This allows us to reformulate the nearest PR system problem into an optimization problem with a simple convex feasible set. We then use a fast gradient method to obtain a nearby PR system to a given non PR system, and illustrate the behavior of our algorithm on several examples.
This is, to the best of our knowledge, the first algorithm that computes a nearby PR system to a given non PR system that
(i)~is not based on the spectral properties of related Hamiltonian matrices or pencils,
(ii)~allows to perturb all matrices $(E,A,B,C,D)$ describing the system, and
(iii)~does not make any assumption on the original given system.
\end{abstract}

\textbf{Keywords.}  distance to positive realness, passivity, port-Hamiltonian system, fast gradient method

\section{Introduction}
In this paper, we consider $m$-input $m$-output linear
time-invariant (LTI) control systems of the form
\begin{align}
\label{eq1:sys}
\begin{split}
E\dot{x}(t) = & \; Ax(t)+Bu(t),
\\
y(t) = & \; Cx(t)+Du(t),
\end{split}
\end{align}
on the unbounded interval $t \in [t_0,\infty)$. Here, $A,E \in \R^{n,n}$, $B \in \R^{n,m}$, $C \in \R^{m,n}$, and $D \in \R^{m,m}$
are given matrices, $x(t)$ is the vector of state variables, $u(t)$ is the vector of inputs, and $y(t)$
is the vector of outputs. The linear system is called a \emph{standard system} when $E=I_n$, where $I_n$ is the identity matrix
of size $n\times n$,
%where $I_n$ is the identity matrix of size $n$,
and a \emph{descriptor system} when $E$ is not invertible.
We use the matrix quintuple $(E,A,B,C,D)$ to refer to a system in the form~\eqref{eq1:sys}.

As mentioned in \cite{FreJ04}, the restriction to systems~\eqref{eq1:sys} with the same number of inputs and outputs
is necessary to have positive real (PR) systems, which are the focus of this paper. Indeed, positive realness of an LTI dynamical system is equivalent to
passivity, which means that the system does not generate energy: the system~\eqref{eq1:sys} is called
\emph{passive} if there exists a nonnegative scalar valued function $\mathcal V$, called the storage function, such that
$\mathcal V(0)=0$ and the dissipation inequality
\begin{equation}\label{eq:def_pass}
\mathcal V(x(t_1))-\mathcal V(x(t_0)) \leq \int_{t_0}^{t_1} y(t)^T u(t)dt
\end{equation}
holds for all admissible $u$, $t_0$, and $t_1 \geq t_0$; see for example \cite{AndV73,LozBEM13}.
%The system is called \emph{strictly passive} if the inequality is strict in~\eqref{eq:def_pass}.
The energy is defined via the inner product of the input and output vectors $u(t)$ and $y(t)$ of the
system hence these vectors need to be of the same length.

Given $(E,A,B,C,D)$, the goal of this paper is to find  $(\Delta_E,\Delta_A,\Delta_B,\Delta_C,\Delta_D)$
with minimum (weighted) Frobenius norm\footnote{
The choice for the (weighted) Frobenius norm is twofold: (i)~it is arguably one of the most popular norm used to measure distances,
and (ii)~it is smooth hence will make the optimization problem easier to tackle. However, our algorithm can easily be extended to any other smooth objective function, e.g., any (weighted) $\ell_p$ norm with $1 < p < +\infty$ (only the computation of the gradient of the corresponding objective function will change).
 } such that
$(E+\Delta_E,A+\Delta_A,B+\Delta_B,C+\Delta_C,D+\Delta_D)$ is a PR system.
We will refer to this problem as the \emph{nearest PR system} problem.
We will also consider the case of nearest standard PR systems  when $E = I_n$ imposing $\Delta_E = 0$.
Since passivity and positive realness are equivalent for LTI systems, the distance to positive realness has
direct  applications in  passive model approximations. For example,
when a real-world problem is approximated by a
model~\eqref{eq1:sys}, the passivity of the physical system may not be preserved, that is, the approximation process
(for example, finite element or finite difference models, linearization, or model order reduction)
makes the passive system
nonpassive. The nonpassive system has to be approximated by a nearby passive system by perturbing
$E,A,B,C$, and $D$.

Several algorithms tackle this problem using the spectral
properties of the related Hamiltonian/skew-Hamiltonian matrices  or pencils
for the input systems that are asymptotically stable, controllable,
observable and almost passive; see~\cite{Tal04,SchT07,WanZKPW10,VoiB11} and the references therein.
The algorithms in~\cite{Tal04} and~\cite{SchT07} impose additional assumptions on the input system, namely $E=I_n$ and $D+D^T$ nonsingular, and are restricted to perturbations of the matrix $C$ only.
These algorithms are based on the displacement of the imaginary eigenvalues of the related
Hamiltonian matrix. The methods in~\cite{WanZKPW10} and~\cite{VoiB11} can deal with general systems (i.e., when $E$ is not identity) by
using the spectral properties of (skew-)Hamiltonian pencils, but they
only allow perturbations in either $B$ or $C$.
In~\cite{BruS13}, authors allow perturbations in all matrices but $E$ and assume that the
system is almost passive. Their approach is based on the perturbation of a general non-dissipative system to enforce dissipativity
using first-order spectral perturbation results for para-Hermitian pencils.
As far as we know, no algorithm exist that does not make any assumption on the input system and that allows perturbations of all matrices $(E,A,B,C,D)$ describing the system.

The nearest PR system problem is complementary with the distance to
nonpassivity for control systems; see~\cite{OveV05} for complex standard systems.
These problems are closely related to the Hamiltonian matrix nearness problems~\cite{AlaBKMM11,GugKL15}.
For example, it is well known~\cite{BoyGFB94,Ant05} that an
asymptotically stable standard system~\eqref{eq1:sys} (i.e., with $E=I_n$)
with positive definite $D+D^T$ is PR if and only if the Hamiltonian matrix
\begin{equation}\label{eq:Ham_matrix}
\mathcal H =\mat{cc} A-B(D+D^T)^{-1}C & -B(D+D^T)^{-1} B^T \\
C^T(D+D^T)^{-1}C & -(A-B(D+D^T)^{-1}C)^T \rix
\end{equation}
has no purely imaginary eigenvalues. Therefore one can use the minimal perturbations
from~\cite{AlaBKMM11} and~\cite{GugKL15} that moves all eigenvalues of $\mathcal H$ away from
the imaginary axis, and find (if they exist) the corresponding perturbations $(\Delta_A,\Delta_B,\Delta_C,\Delta_D)$ making the system passive.
The later step is however in general not possible as it involves dealing with the additional block structure in the Hamiltonian matrix hence solving highly nonlinear matrix equations~\cite{GugKL15}.
It is an open problem to extract system matrices $(A,B,C,D)$ from a given Hamiltonian matrix that has no purely imaginary eigenvalues, that is, to express the Hamiltonian matrix as a matrix of the form~\eqref{eq:Ham_matrix}. \\

In this paper, we compute a nearby PR system to a given non PR system using the set of linear port-Hamiltonian (PH) systems. Our algorithm is based on the generalization of the results from~\cite{GilS16} and~\cite{GilMS17} where authors used the structure of PH systems to find a nearby stable standard system and a nearby stable descriptor system to an unstable one, respectively. As opposed to the previously proposed methods, our algorithm is not based on the spectral properties of Hamiltonian matrices or pencils and can be applied to any given LTI dynamical system.

The paper is organized as follows. In Section~\ref{nota}, we introduce the notation and definitions that will be used throughout the paper.
In Section~\ref{sec:keyresults}, we %introduce the PH-form of a system~\eqref{eq1:sys} and
show
that a system is extended strictly PR
if and only if
it can be written as a strict PH system (Theorem~\ref{thm:main_result}).
This allows us to reformulate, in Section~\ref{reformu}, the nearest PR system problem
into an optimization problem with a simple convex feasible set.
In Section~\ref{algosol}, we use a fast gradient method to tackle our reformulation and obtain a nearby PR system to a given non PR system,  for both standard and general systems. We also propose several initialization strategies.
The behavior of the algorithm is analyzed on several examples in Section~\ref{numexp}.

\section{Notation, preliminaries and problem definition} \label{nota}

In the following, %by $i\R$,
we denote %the imaginary axis of the complex plane,
by ${\|\cdot\|}_F$ the Frobenius norm, and by ${}^*$ the complex conjugate transpose.
We write $A\succ 0$ (resp.\@ $A\succeq 0$) if $A$ is symmetric positive definite (resp.\@ semidefinite).
The real part of $s\in \C$
is denoted by $\real{s}$ and $j$ stands for the imaginary number. %$j=\sqrt{-1}$.

In the next two subsections, we define admissible and PR systems (Sections~\ref{admis} and \ref{posrelsys}).
This allows us to give a formal definition of the nearest PR system problem in Section~\ref{sec:theproblem}. In Section~\ref{phsys}, we
briefly describe PH systems that will be our main tool to reformulate the nearest PR system problem.

\subsection{Admissible systems} \label{admis}

The system~\eqref{eq1:sys} is called \emph{regular} if the matrix pair $(E,A)$ is regular, that is, if
$\operatorname{det}(\lambda E-A)\neq 0$ for some $\lambda \in \mathbb C$, otherwise it is called \emph{singular}.
For a regular matrix pair $(E,A)$, the roots of the polynomial $\operatorname{det}(z E-A)$ are called \emph{finite eigenvalues} of the pencil
$zE-A$ or of the pair $(E,A)$.
%, that is, $\lambda\in \mathbb C$ is a finite eigenvalue of the pencil
%$zE-A$ if there exists a vector $x\in \mathbb C^n \setminus \{ 0\}$ such that
%$(\lambda E-A)x=0$, and $x$ is called an \emph{eigenvector} of $zE-A$ corresponding to the eigenvalue $\lambda$.
A regular pencil $zE-A$ has \emph{$\infty$ as an eigenvalue} if $E$ is singular.

A regular real matrix pair $(E,A)$ (with $E,A\in \R^{n,n}$) can be transformed to \emph{Weierstra\ss\ canonical form} \cite{Gan59a}, that is, there exist nonsingular matrices $W, T \in \C^{n,n}$ such that
\[
E=W\mat{cc}I_q& 0\\0&N\rix T \quad \text{and}\quad A=W \mat{cc}J &0\\0&I_{n-q}\rix T,
\]
where $J \in \C^{q,q}$ is a matrix in \emph{Jordan canonical form} associated with the $q$ finite eigenvalues of
the pencil $z E-A$ and  $N \in \C^{n-q,n-q}$ is a nilpotent matrix in Jordan canonical form  corresponding
to  $n-q$ times the  eigenvalue $\infty$. If $q < n$ and $N$ has degree of nilpotency $\nu \in \{1,2,\ldots\}$, that is, $N^{\nu}=0$ and $N^i \neq 0$ for $i=1,\ldots,\nu-1$, then $\nu$ is called the \emph{index of the pair} $(E,A)$. If $E$
is nonsingular, then by convention the index is $\nu=0$; see for example~\cite{Meh91,Var95}.
The index $\nu$ does not depend on the transformation to canonical form~\cite[Lemma 2.10]{KunM06}.

The  matrix pair $(E,A) \in (\R^{n,n})^2$ is said to be \emph{stable} (resp.\@ \emph{asymptotically stable})
if
%it is regular and of index at most one, and
all the finite eigenvalues of $zE-A$ are
in the closed (resp.\@ open) left half of the complex plane and those on the
imaginary axis are semisimple.
The  matrix pair $(E,A)$ is said to be \emph{admissible} if it is regular, of index at most one, and
asymptotically stable.
A dynamical system $(E,A,B,C,D)$ in the form~\eqref{eq1:sys} is called (asymptotically) stable
if the matrix pair $(E,A)$ is (asymptotically) stable. Similarly, it is called
admissible if the matrix pair $(E,A)$ is admissible.

\subsection{Positive real systems} \label{posrelsys}

To define positive real systems, throughout this section we
assume that the system~\eqref{eq1:sys} is regular.
The system~\eqref{eq1:sys} can be described by its \emph{transfer function}
$G(s):\C\rightarrow (\C \cup \{\infty\})^{m,m}$, defined by
\begin{equation}\label{tran_fun}
G(s):=C(sE-A)^{-1}B+D,\quad s \in \C.
\end{equation}
Conversely, given a rational function $G(s):\C\rightarrow (\C \cup \{\infty\})^{m,m}$, any
representation of $G(s)$ in the form~\eqref{tran_fun} is called a  realization   of
$G(s)$. A realization is called minimal if the matrices $A$ and $E$ are of smallest
possible dimension.
%, or equivalently, the system is called minimal if it is both controllable and observable.
In this case the poles  of the transfer function $G(s)$ are exactly the eigenvalues of the pencil $zE-A$.

Positive realness is a well-known concept in system, circuit and
control theory. In control theory, the PR systems have a significant role
in stability analysis~\cite{Pop73,AndV73}, see also~\cite{Jos89} and the references therein for applications.
The PR systems have been defined in several different ways in the literature;
see \cite{AndV73,Wen88,LozJ90,SunKS94,IonT87,HadB91,HuaIMS99}
for  standard linear systems, \cite{WanC96,FreJ04,LozBEM13} for continuous-time descriptor systems, and \cite{ZhaLX02} for continuous- and discrete-time descriptor systems.
We follow~\cite{SunKS94} and define the positive realness in the frequency domain as follows.
\begin{definition}\label{def:PR_SPR_ESPR}
The system~\eqref{eq1:sys} is said to be
\begin{enumerate}
\item  \emph{positive real (PR)} if its transfer function $G(s)$ satisfies
\begin{enumerate}
\item $G(s)$ has no pole in $\real{s} >0$, and
%, i.e., the system is stable.
\item $G(s)+G(s)^* \succeq 0$ for all $s$ such that $\real{s} > 0$.
\end{enumerate}
\item \emph{strictly positive real (SPR)} if its transfer function $G(s)$ satisfies
\begin{enumerate}
\item $G(s)$ has no pole in $\real{s} \geq0$, and
%, i.e., the system is asymptotically stable.
\item $G(jw)+G(jw)^* \succ 0$ for $w \in [0,\infty)$.
\end{enumerate}
\item \emph{extended strictly positive real (ESPR)} if it is SPR and
$G(j\infty)+G(j\infty)^* \succ 0$.
\end{enumerate}
\end{definition}

Note that the condition $(a)$ in the definition of SPR
is equivalent to the system being asymptotically stable.
An asymptotically stable system~\eqref{eq1:sys} with a minimal realization
is passive (resp.\@ strictly input passive) if and only if it is PR (resp.\@ ESPR).
For more details of these facts, we refer to~\cite{AndV73} and \cite[pp.~174-175]{DesV75}.
Furthermore, ESPR implies SPR, which further implies PR.

Note also that $G(s)=C(sE-A)^{-1}B+D$ is a rational function and has a power series expansion about $s=\infty$
of the form
\begin{equation}\label{eq:series_expan}
G(s)=C(sE-A)^{-1}B+D=\sum_{i=-p}^{\infty} \frac{H_i}{s^i},
\end{equation}
where $H_i$ are real matrices of size $m$. If $s=\infty$ is not a pole of $G(s)$ (i.e., when $E$ is invertible),
then $p=0$ and $G(\infty)=D=H_0$. This implies that
for a standard system $(I_n,A,B,C,D)$ with $D+D^T \succ 0$, the notion of SPR and ESPR are the same, because
$G(j\infty)+G(j\infty)^* \succ 0$ if and only if $D+D^T \succ 0$.
In the descriptor case, in view of~\eqref{eq:series_expan},  we have $p\geq 1$ the order of the pole $s=\infty$.
In this case, $G(\infty)$ (if it exists) is not necessarily equal to $D$. To illustrate this
consider the system
\begin{equation} \label{ex1}
E = \mat{ccc}1 & 0& 0\\0 &0&0 \\ 0&0&0\rix,\quad A=\mat{ccc}-1&0&0\\ 0&1&0 \\ 0&0&1\rix,
B = \mat{c}1\\ 1\\ \alpha \rix,
 C= \mat{c}1 \\ 1 \\ 1 \rix^T, D=\frac{1}{2},
\end{equation}
where $\alpha$ is a constant. This is an admissible system with the transfer function
\[
G(s)=\frac{1}{s+1}-\alpha-\frac{1}{2},
\]
so that
\[
G(jw)+G(jw)^*=\frac{2}{w^2+1} -2\alpha -1.
\]
Therefore, for $\alpha=-\frac{1}{2}$, the system is SPR but not ESPR despite $D \succ 0$.

\subsection{Nearest positive-real system problems} \label{sec:theproblem}

We can now define the nearest system problems that will be studied in the following sections.
Let us formulate the problem in a rather generic way.
\begin{problem*}\label{prob_g}
For a given system $(E,A,B,C,D)$ and a given set $\mathcal{D}$, find the nearest system $(\tilde E,\tilde A,\tilde B,\tilde C, \tilde D) \in \mathcal{D}$ to $(E,A,B,C,D)$, that is, solve
%More precisely, let $\mathbb S_S$ be the set of all PR systems $(\tilde E,\tilde A,\tilde B,\tilde C, \tilde D)$
%in the form~\eqref{eq1:sys}, then we wish to compute
\begin{equation*}\label{def_F}
\inf_{(\tilde E,\tilde A,\tilde B,\tilde C, \tilde D) \in \mathcal{D}}
\mathcal{F}(\tilde A,\tilde B,\tilde C,\tilde D,\tilde E),
%\tag{$\mathcal P$}
\end{equation*}
where
\begin{equation}\label{eq:def_F}
\mathcal{F}(\tilde A,\tilde B,\tilde C,\tilde D,\tilde E) =
{\|A-\tilde A\|}_F^2+{\|B-\tilde B\|}_F^2
+{\|C-\tilde C\|}_F^2+{\|D-\tilde D\|}_F^2
+ {\|E-\tilde E\|}_F^2.
\end{equation}
\end{problem*}
We will consider the following two variants of this problem:
\begin{enumerate}
\item \underline{Nearest PR system} ($\mathcal P$):~
$\mathcal{D} = \mathbb S$
where $\mathbb S$ is the set of all PR systems
$(\tilde E,\tilde A,\tilde B,\tilde C, \tilde D)$.

%\item \underline{Nearest standard PR system} ($\mathcal P^s$):~ $\mathcal{D}$ equals $\mathbb S$ intersected with the  constraint $\tilde{E} = I_n$.

\item \underline{Nearest ESPR system} ($\mathcal P_e$):~
$\mathcal{D} = \mathbb S_e$ where $\mathbb S_e$ is the set of all admissible ESPR systems
$(\tilde E,\tilde A,\tilde B,\tilde C, \tilde D)$ with $\tilde D+\tilde D^T \succ 0$.

%\item \underline{Nearest standard ESPR system} ($\mathcal P_e^s$):~
%$\mathcal{D}$ equals $\mathbb S_e$ intersected with the constraint $\tilde{E} = I_n$.
\end{enumerate}

We will also consider the variants of ($\mathcal P$) and ($\mathcal P_e$) for standard systems with the additional constraints that $\tilde{E} = E = I_n$.

These problems are challenging because the feasible sets $\mathbb S$ and $\mathbb S_e$ are unbounded, highly nonconvex, and neither open nor closed. To see this,
let us consider the system from~\eqref{ex1}. This system is ESPR with $\alpha=-1$, and thus $(E,A,B,C,D) \in \mathbb S_e \subseteq \mathbb S$.
For
\[
\Delta_E=\mat{ccc}\epsilon_1 &0&0\\0&\epsilon_2 &0\\0&0&0\rix \; \text{ and } \;
\Delta_A=\mat{ccc}\delta &0&0\\0&0 &0\\0&0&0\rix,
\]
 the  transfer function of the perturbed system $(E+\Delta_E,A+\Delta_A,B,C,D)$ is given by
\[
G(s)=\frac{1}{s(1+\epsilon_1)+(1-\delta)}+\frac{1}{s\epsilon_2-1}-\alpha+\frac{1}{2},
\]
hence
\[
G(jw)+G(jw)^*=\frac{2(1-\delta)}{(1-\delta)^2+w^2(1+\epsilon_1)^2}
-\frac{2}{1+w^2\epsilon_2^2}-2\alpha+1.
\]
For $\delta=\epsilon_1=0$ and $\epsilon_2>0$, the perturbed system is regular,
of index one but has two finite eigenvalues $\lambda_1=-1$ and $\lambda_2=\frac{1}{\epsilon_2} > 0$.
This implies that the system is not stable hence not PR.
This shows that
$\mathbb S$ and $\mathbb S_e$ are not open.
For $\epsilon_1=-\delta$ with $0\leq \delta<1$ and $\epsilon_2=0$, the perturbed system is ESPR.
The perturbed matrix pair $(E+\Delta_E,A+\Delta_A)$ becomes singular as $\delta \rightarrow 1$
so that the perturbed system becomes non PR as $\delta \rightarrow 1$.
This shows that
$\mathbb S$ and $\mathbb S_e$ are not closed.
%It is also clear that the set $\mathbb S_e$ is unbounded and so is $\mathbb S$.
Further the sets $\mathbb S$  and
$\mathbb S_e$ are nonconvex: considering the systems
\begin{eqnarray*}
\Sigma_1 &=& \Big(I_2,\underbrace{\mat{cc} -0.3 & 10\\0 &-0.3 \rix}_{A_1},\mat{c}1\\1 \rix,\mat{c}1\\0 \rix^T,\frac{1}{2}\Big)
\quad
\text{ and }\\
\Sigma_2 &=& \Big(I_2,\underbrace{\mat{cc} -0.3 & 0\\10 &-0.3 \rix}_{A_2},\mat{c}1\\1 \rix,\mat{c}0\\1 \rix^T,\frac{1}{2}\Big),
\end{eqnarray*}
it is easy to check that $\Sigma_1,\Sigma_2 \in \mathbb S_e \subset \mathbb S$ but
$\gamma \Sigma_1 + (1-\gamma)\Sigma_2 \notin \mathbb S$ for $\gamma=\frac{1}{2}$ since $\frac{1}{2}A_1+ \frac{1}{2}A_2$
has an eigenvalue $\lambda=4.7$ in the right half complex plane.

To address the challenging problems defined above,
we aim to reformulate them so that it is easier to derive optimization algorithms.
An important property of such a reformulation is that the projection onto the feasible set can be performed efficiently.
Such a reformulation exists and can be obtained by extending the results from~\cite{GilS16}
(resp.\@ \cite{GilMS17}) that used PH systems for computing the nearest stable matrix (resp.\@ matrix pair).

\subsection{Port-Hamiltonian systems} \label{phsys}

A linear time-invariant input-state-output \emph{port-Hamiltonian (PH)} system can be written as
\begin{align}
\label{eq:phsystem}
\begin{split}
E\dot{x}(t) = & \; (J-R)Qx(t) + (F-P)u(t), \\
y(t) = & \; (F+P)^T Qx(t) + (S+N)u(t),
\end{split}
\end{align}
%
%where $x(t)$, $u(t)$ and $y(t)$ are as in~\eqref{eq1:sys}, and
where the following conditions must be satisfied:
\begin{itemize}

\item The matrix $Q \in \R^{n,n}$ is invertible, $E \in \R^{n,n}$, and  $Q^TE=E^TQ \succeq 0$. The function $x\rightarrow \frac{1}{2}x^T Q^TE x$ is the \emph{Hamiltonian} and
describes the energy of the system.

\item The matrix $J^T=-J \in \R^{n,n}$
is the structure matrix that describes flux among energy storage elements.

\item The matrix $R \in \R^{n,n}$
with $R\succeq 0$ is the dissipation matrix and describes the energy dissipation/loss in the system.

\item The matrices $F\pm P \in \R^{n,m}$ are the port matrices describing the manner in which energy enters and exits
the system.

\item The matrix $S+N$, with $0 \preceq S \in \R^{m,m}$ and $N^T=-N \in \R^{m,m}$,  describes
the direct feed-through from input to output.

\item The matrices $R$, $P$ and $S$ satisfy
\[
K=\mat{cc}R &P\\P^T &S \rix \succeq 0.
\]
\end{itemize}

In the following, we will refer to $K$ as the \emph{cost matrix} of the PH system, because
it corresponds to the cost matrix of an
infinite horizon linear quadratic optimal control problem. For $K \succ 0$, we refer to~\eqref{eq:phsystem} as a \emph{strict PH system}.
We note that this definition of
PH system is slightly more restrictive than that of PH systems in \cite{BeaMXZ17_ppt},
where it is not required for the matrix $Q$ to be invertible.

PH systems generalize the classical Hamiltonian systems and
recently have received a lot attention in energy based modeling;
see \cite{Sch06,Sch13,SchM95,SchM02,SchM13,Sch2014port} for some major references.
The Hamiltonian $\mathcal H(x)=\frac{1}{2}x^TQ^TEx$
defines a storage function associated with the supply rate $y(t)^T u(t)$, and satisfies
\begin{equation} \label{HHyu}
\mathcal H(x(t_1))-\mathcal H(x(t_0)) \leq \int_{t_0}^{t_1} y(t)^T u(t)dt,
\end{equation}
which guarantees the passivity of the system; see~\eqref{eq:def_pass}.

 We note that regular PH systems are always stable,~see~\cite[Lemma 3.1]{MehMS16} for
standard PH systems and~\cite[Lemma 2]{GilMS17} for descriptor PH systems:
the matrix pair $(E,(J-R)Q)$ is a so-called dissipative Hamiltonian matrix pair.
In particular, if $R \succ 0$, then $(E,(J-R)Q)$ is admissible; see \cite{GilMS17} for more details, and
\cite{MehMS16} and \cite{MehMS17} for various structured distances to asymptotic stability for
complex PH systems and real PH systems, respectively.

\section{Key results for positive real systems} \label{sec:keyresults}

In this section, we study the link between PR systems and PH systems. The main result of this section,
which is the main theoretical result of this paper, is to prove that
a system is ESPR with $D+D^T \succ 0$ if and only if
it can be written as a strict PH system; see Theorem~\ref{thm:main_result} at the end of the section.

The positive realness of a system~\eqref{eq1:sys} can be characterized in terms
of solutions $X$ to the following linear matrix inequalities (LMIs):
\begin{equation}\label{eq:LMI1}
\mat{cc} A^TX +X^T A & X^TB-C^T \\B^TX-C & -D-D^T \rix \preceq 0
\quad \text{and} \quad E^TX=X^TE \succeq 0.
\end{equation}
%
%For standard systems (when $E=I_n$), positive realness can be characterized via a positive semidefinite solution of the left-hand side LMI in~\eqref{eq:LMI1}. In fact, it is shown in~\cite{FreJ04} that the
%existence of a solution to the LMIs in~\eqref{eq:LMI1} is indeed sufficient for positive realness.
We have the following result.
% **** give theorem number in FreJ04
\begin{theorem}[\cite{FreJ04}, Theorem~3.1] \label{thm:suff_PR}
Consider a regular system $(E,A,B,C,D)$ in the form~\eqref{eq1:sys}. If the LMIs~\eqref{eq:LMI1} have a solution $X \in \R^{n,n}$, then
$(E,A,B,C,D)$ is PR.
\end{theorem}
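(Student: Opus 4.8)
The plan is to verify the two defining properties of a PR system (Definition~\ref{def:PR_SPR_ESPR}, item~1) directly from a solution $X$ of the LMIs~\eqref{eq:LMI1}, via a Kalman--Yakubovich--Popov-type computation. Write
\[
M := \mat{cc} A^TX +X^T A & X^TB-C^T \\ B^TX-C & -D-D^T \rix \preceq 0 ,
\]
a real symmetric negative semidefinite matrix, so that $z^*Mz \le 0$ holds for every complex vector $z$ as well (split $z$ into its real and imaginary parts and use $M=M^T$). By the second LMI, $E^TX = X^TE \succeq 0$, so $x^*E^TX x$ is a nonnegative real number for every $x \in \C^n$.

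First I would prove the frequency-domain inequality, i.e.\ condition~(b), wherever $G$ is defined. Fix $s \in \C$ with $\real{s} > 0$ such that $sE-A$ is invertible (all but finitely many $s$, since $(E,A)$ is regular), and fix $v \in \C^m$. Set $x := (sE-A)^{-1}Bv$, so that $sEx = Ax + Bv$ and $G(s)v = Cx + Dv$. Evaluating $z^*Mz$ at $z = \mat{c} x \\ v \rix$ and substituting $Ax = sEx - Bv$, the terms containing $B^TXx$ cancel and, using $\real{\bar s} = \real{s}$ with the symmetry and positive semidefiniteness of $E^TX$, the inequality $z^*Mz \le 0$ reduces to
\[
2\,\real{s}\,\bigl(x^*E^TX x\bigr) \;\le\; 2\,\real\bigl(v^*Cx\bigr) + v^*(D+D^T)v \;=\; v^*\bigl(G(s)+G(s)^*\bigr)v .
\]
Since $\real{s} > 0$ and $x^*E^TXx \ge 0$, the left-hand side is nonnegative, hence $v^*\bigl(G(s)+G(s)^*\bigr)v \ge 0$ for all $v$; thus $G(s)+G(s)^* \succeq 0$ at every $s$ in the open right half-plane at which $G$ is defined.

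Next I would establish condition~(a), that $G$ has no pole in $\real{s} > 0$. Suppose, for contradiction, that $s_0$ with $\real{s_0} > 0$ is a pole of $G$ of order $k \ge 1$, with nonzero Laurent coefficient $R$ at $(s-s_0)^{-k}$. A square complex matrix $R$ with $v^*Rv = 0$ for all $v$ must be zero, so there is $v$ with $c := v^*Rv \ne 0$; then $s \mapsto v^*G(s)v$ has a genuine pole at $s_0$, and $s \mapsto \real\bigl(v^*G(s)v\bigr) = \frac{1}{2}\,v^*\bigl(G(s)+G(s)^*\bigr)v$ is harmonic on a punctured neighbourhood of $s_0$ with leading term $\real\bigl(c(s-s_0)^{-k}\bigr)$, which takes arbitrarily large negative values as $s \to s_0$. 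Since a full neighbourhood of $s_0$ lies in the open right half-plane, this contradicts the nonnegativity obtained in the previous paragraph. Hence $G$ has no pole in $\real{s}>0$; combined with the previous step, both conditions of Definition~\ref{def:PR_SPR_ESPR}(1) hold, so $(E,A,B,C,D)$ is PR.

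The algebraic cancellation in the KYP identity is routine; the step that genuinely needs an idea beyond LMI manipulation is ruling out right-half-plane poles, since the identity above only constrains $G$ where it is already analytic, and this is where a short complex-analytic (harmonic-function) argument enters. One could instead pass to a minimal realization and use that its poles coincide with the finite eigenvalues of the pencil, but the harmonic-function argument avoids any minimality or controllability/observability hypothesis and so matches the generality of the statement.
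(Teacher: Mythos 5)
Your argument is correct, but there is nothing in the paper to compare it against: Theorem~\ref{thm:suff_PR} is imported verbatim from \cite{FreJ04} (Theorem~3.1) and the paper gives no proof of it. What you have written is a valid self-contained proof in the style of the Kalman--Yakubovich--Popov lemma. The algebra checks out: with $z=(x,v)$, $x=(sE-A)^{-1}Bv$ and $Ax=sEx-Bv$, the terms $x^*X^TBv$ and $v^*B^TXx$ do cancel, and using $E^TX=X^TE$ one gets $z^*Mz = 2\real(s)\,x^*E^TXx - 2\real(v^*Cx) - v^*(D+D^T)v \le 0$, which is exactly the frequency-domain inequality since $v^*(G(s)+G(s)^*)v = 2\real(v^*Cx)+v^*(D+D^T)v$. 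Your two supporting observations are also sound: $M\preceq 0$ over $\R$ implies $z^*Mz\le 0$ over $\C$ because $M$ is symmetric, and a complex matrix $R$ with $v^*Rv=0$ for all $v\in\C^m$ vanishes by polarization, so the harmonic-function argument correctly excludes poles in the open right half-plane without any minimality, controllability, or observability assumption --- matching the generality of the statement, which only assumes regularity of $(E,A)$ (needed so that $\det(sE-A)$ has only finitely many zeros and $G$ is a well-defined rational function). The only remark worth making is that the pole-exclusion step could alternatively be phrased as the standard fact that a rational function whose real part is nonnegative wherever it is defined on an open set cannot have a pole there; your explicit Laurent-leading-term computation is just an unwound version of that fact and is fine as written.
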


The converse of Theorem~\ref{thm:suff_PR} is true with some additional assumptions.
In fact the positive real lemma for standard systems~\cite{AndV73} proves that
if a system is PR and minimal, then the existence of a solution to the LMIs~\eqref{eq:LMI1} is also necessary.
Similarly, with an additional condition, the positive real lemma for descriptor systems~\cite{FreJ04} proves that  the existence of a solution to the LMIs~\eqref{eq:LMI1} is also necessary for positive realness.

Theorem~\ref{thm:suff_PR} gives an alternative way (compared to the one described in~\eqref{HHyu})
to show that every PH system is positive real by providing an explicit solution to~\eqref{eq:LMI1}, as shown in the following theorem
which is a generalization of~\cite[Theorem 7.1]{Sch2014port} where only standard systems are considered.

\begin{theorem}\label{cor1}
Every regular PH system in the form~\eqref{eq:phsystem} is PR.
\end{theorem}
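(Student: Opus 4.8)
The plan is to invoke Theorem~\ref{thm:suff_PR}: since the PH system is assumed regular, it suffices to exhibit a single matrix $X \in \R^{n,n}$ solving the LMIs~\eqref{eq:LMI1}. First I would read off the system matrices from~\eqref{eq:phsystem} by matching it against the generic form~\eqref{eq1:sys}, namely $A = (J-R)Q$, $B = F-P$, $C = (F+P)^TQ$, and $D = S+N$. The natural candidate certificate is $X = Q$, which is well-defined since $Q$ is invertible in a PH system.

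The second inequality in~\eqref{eq:LMI1} is immediate: $E^TX = E^TQ = Q^TE = X^TE$ and this matrix is $\succeq 0$ by the Hamiltonian condition $Q^TE = E^TQ \succeq 0$ built into the definition of a PH system, so no work is needed there.

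For the first inequality, I would expand the block matrix using the skew-symmetry $J^T = -J$ and $N^T = -N$. This gives $A^TX + X^TA = Q^T(J-R)^TQ + Q^T(J-R)Q = -2\,Q^TRQ$, $X^TB - C^T = Q^T(F-P) - Q^T(F+P) = -2\,Q^TP$, and $-D - D^T = -(S+N) - (S-N) = -2S$, so that
\[
\mat{cc} A^TX + X^TA & X^TB - C^T \\ B^TX - C & -D - D^T \rix = -2 \mat{cc} Q^TRQ & Q^TP \\ P^TQ & S \rix .
\]
The key observation is then that the remaining $2\times 2$ block matrix is a congruence transform of the cost matrix $K$, namely
\[
\mat{cc} Q^TRQ & Q^TP \\ P^TQ & S \rix = \diag(Q^T, I_m)\, \mat{cc} R & P \\ P^T & S \rix \, \diag(Q, I_m),
\]
and since $K \succeq 0$ and positive semidefiniteness is preserved under congruence, the left-hand side of~\eqref{eq:LMI1} is $\preceq 0$. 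Hence $X = Q$ solves~\eqref{eq:LMI1}, and Theorem~\ref{thm:suff_PR} yields that the system is PR.

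I do not expect any genuine obstacle here: the argument is a short explicit verification. The only points that require a bit of care are the sign bookkeeping when expanding $A^TX + X^TA$ and $-D-D^T$ (where the skew-symmetry of $J$ and $N$ is what makes the cross terms cancel), and recognizing that the resulting block matrix is the congruence $\diag(Q^T,I_m)\,K\,\diag(Q,I_m)$ rather than $K$ itself, which is exactly why invertibility of $Q$ is not even needed for this direction — only $K \succeq 0$ matters.
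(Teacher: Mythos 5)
Your proof is correct and follows exactly the paper's own argument: certify positive realness via Theorem~\ref{thm:suff_PR} with $X=Q$, reducing the first LMI to $-2\,\diag(Q^T,I_m)\,K\,\diag(Q,I_m)\preceq 0$. Even your closing remark that invertibility of $Q$ is not needed is made verbatim in the paper immediately after its proof.
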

\proof Let $(E,A,B,C,D)$ be a regular PH system with
$A=(J-R)Q$, $B=F-P$, $C=(F+P)^TQ$ and $D=S+N$, where $J^T=-J$,
$N^T=-N$, $\mat{cc}R & P\\P^T & S \rix \succeq 0$, $Q$ is invertible and $E^TQ=Q^TE \succeq 0$.
By Theorem~\ref{thm:suff_PR}, to prove that this system is PR, it suffices to prove the existence of a solution $X$ to the LMIs~\eqref{eq:LMI1}.
It turns out that $X=Q$ is one. In fact, we have
\begin{align*}
&\mat{cc} A^TQ +Q^T A & Q^TB-C^T \\B^TQ-C & -D-D^T \rix\\
&=\mat{cc} ((J-R)Q)^TQ +Q^T (J-R)Q & Q^T(F-P)-((F+P)^TQ)^T \\(F-P)^TQ-(F+P)^TQ & -(S+N)-(S+N)^T \rix\\
&= -2 \mat{cc} Q^TRQ & Q^TP \\P^TQ & S \rix = -2\mat{cc}Q^T &0\\0 &I_m\rix
\mat{cc} R & P \\P^T & S \rix
\mat{cc}Q & 0\\0 &I_m\rix \preceq 0,
\end{align*}
since $\mat{cc}R & P\\P^T & S \rix \succeq 0$.  % and $Q$ is invertible.
\eproof
Note that the proof of Theorem~\ref{cor1} does not require $Q$ to be invertible.
In the following, a necessary and sufficient condition for a system in the form~\eqref{eq1:sys}
to be ESPR is obtained in terms of the existence of a solution of the LMIs~\eqref{eq:LMI1}. We will use this
result to characterize the set of all admissible ESPR systems in terms of PH systems.

\begin{theorem}[\cite{ZhaLX02}, Theorem~2] \label{thm:espr_equi}
Let $(E,A,B,C,D)$  define a system~\eqref{eq1:sys}. Then
it is admissible, ESPR and satisfying $D+D^T \succ 0$
if and only if
there exists a solution $X$ to the LMIs
\begin{equation}\label{eq:LMI2}
\mat{cc} A^TX +X^T A & X^TB-C^T \\B^TX-C & -D-D^T \rix \prec 0
\quad \text{and} \quad E^TX=X^TE \succeq 0.
\end{equation}
\end{theorem}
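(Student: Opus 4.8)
The plan is to prove the two implications separately. The direction ``the LMIs~\eqref{eq:LMI2} are solvable $\Rightarrow$ the system is admissible, ESPR, and $D+D^T\succ 0$'' is a Kalman--Yakubovich--Popov--type computation and is the easy one; the converse is the genuinely hard direction, and I would reduce it to the classical (standard-system) strict positive-real lemma using the index-one structure, that lemma itself resting on a spectral factorization of the Popov function.

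For the easy direction, suppose $X$ solves~\eqref{eq:LMI2}; put $P:=E^TX=X^TE\succeq 0$ and let $M\prec 0$ be the left-hand block of~\eqref{eq:LMI2}. Testing $M$ on $\mat{c}v\\0\rix$ gives $A^TX+X^TA\prec 0$, which also forces $X$ to be invertible (if $Xv=0$ then $v^*(A^TX+X^TA)v=0$), and the $(2,2)$ block gives $D+D^T\succ 0$. Writing $X^TA=-S_0+W$ with $S_0:=-\tfrac12(A^TX+X^TA)\succ 0$ and $W:=\tfrac12(X^TA-A^TX)$ real skew-symmetric, one gets $\real\big(v^*X^T(sE-A)v\big)=\real(s)\,v^*Pv+v^*S_0v>0$ for $v\neq 0$ and $\real(s)\geq 0$, so $sE-A$ is invertible there; this yields regularity, asymptotic stability, and that $G$ has no pole with $\real(s)\geq 0$. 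Next I would establish, for $\real(s)\geq 0$ and $z:=(sE-A)^{-1}B\xi$, the KYP identity
\[
\xi^*\big(G(s)+G(s)^*\big)\xi \;=\; 2\real(s)\,z^*Pz \;-\; \mat{c}z\\ \xi\rix^{*} M \mat{c}z\\ \xi\rix,
\]
obtained by expanding the quadratic form, substituting $Az=sEz-B\xi$, and using $E^TX=X^TE$. On $s=jw$ with $w$ finite this gives $G(jw)+G(jw)^*\succ 0$ for $\xi\neq 0$. To get index at most one I would exclude an infinite Jordan chain of length $\geq 2$: if $Ex_1=0$, $x_1\neq 0$, $Ax_1=Ex_2$, then $x_1^*Px_1=(Ex_1)^*Xx_1=0$ forces $Px_1=0$, hence $x_1^*(-S_0+W)x_1=x_1^*X^TEx_2=x_1^*Px_2=0$, and the real part gives $x_1^*S_0x_1=0$, impossible. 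With index at most one, $(jwE-A)^{-1}B\xi$ has a finite limit as $w\to\infty$, so $G(j\infty)$ exists and passing to the limit in the identity gives $G(j\infty)+G(j\infty)^*\succ 0$. Hence the system is admissible, ESPR, and $D+D^T\succ 0$.

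For the converse, assume admissibility, ESPR, and $D+D^T\succ 0$. Using an equivalence transformation from the Weierstra\ss\ form of the index-one pair $(E,A)$, I would split the system into a standard asymptotically stable ``slow'' part $(I,A_1,B_1,C_1)$ and an algebraic ``fast'' part (nilpotent block $N=0$) that only adds a constant to the transfer function, so $G(s)=C_1(sI-A_1)^{-1}B_1+D_0$ with $D_0+D_0^T=G(j\infty)+G(j\infty)^*\succ 0$. ESPR makes the Popov function $\Phi(s):=G(-s)^T+G(s)$ para-Hermitian, analytic and positive definite on the extended imaginary axis; by matrix spectral factorization, $\Phi(s)=W(-s)^TW(s)$ with $W$ and $W^{-1}$ both stable. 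Comparing a realization of this factorization with $(A_1,B_1,C_1,D_0)$ (first restricting to the controllable--observable subsystem, then extending on the remaining asymptotically stable modes) yields a symmetric $X_1$ solving the standard \emph{strict} positive-real LMI; I would then build $X$ in block form aligned with the Weierstra\ss\ decomposition --- choosing the ``fast'' diagonal block to make $E^TX$ the prescribed semidefinite matrix --- transform back to the original coordinates, and check directly that $E^TX=X^TE\succeq 0$ and the strict LMI~\eqref{eq:LMI2} hold.

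I expect the converse to be the main obstacle, on two counts: first, establishing the spectral factorization --- equivalently, a stabilizing solution of the associated algebraic Riccati equation, which is the core content of the KYP lemma; and second, the descriptor bookkeeping needed so that the block-assembled $X$ satisfies \emph{both} the strict inequality and the side constraint $E^TX=X^TE\succeq 0$ after returning to the original basis, together with keeping the LMI \emph{strict} (not merely $\preceq 0$ as in Theorem~\ref{thm:suff_PR}) when the given realization is not minimal. This is exactly the content of~\cite[Theorem~2]{ZhaLX02}, which one may alternatively invoke directly.
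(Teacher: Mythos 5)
The paper does not prove this statement at all: it is imported verbatim from the literature (the citation \cite{ZhaLX02} appears in the theorem header), so the paper's ``proof'' is the citation itself. Your attempt therefore cannot match the paper's route --- anything you prove is strictly more than the paper does. With that said, your forward direction (solvability of \eqref{eq:LMI2} implies admissible, ESPR, $D+D^T\succ 0$) is correct and essentially complete: the $(1,1)$ and $(2,2)$ block arguments, the invertibility of $X$, the estimate $\real\bigl(v^*X^T(sE-A)v\bigr)=\real(s)\,v^*Pv+v^*S_0v>0$ giving regularity and the absence of poles in $\real(s)\ge 0$, the KYP identity (I checked the algebra: substituting $Az=sEz-B\xi$ and using $X^TE=P=P^T$ does produce exactly $2\real(s)z^*Pz-[z;\xi]^*M[z;\xi]$), the exclusion of a length-two chain at infinity, and the passage to the limit $w\to\infty$ all hold up. This is a clean generalized-KYP computation of the kind one finds in \cite{ZhaLX02} and \cite{FreJ04}, and it is the only direction that could plausibly be done ``by hand'' here.

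The converse, however, is a programme rather than a proof, and the two obstacles you yourself flag are precisely where the content of \cite[Theorem~2]{ZhaLX02} lives. The step ``comparing a realization of the spectral factorization with $(A_1,B_1,C_1,D_0)$ \ldots yields a symmetric $X_1$ solving the standard strict PR LMI'' hides the existence of a stabilizing solution of the associated Riccati equation and, more seriously, the treatment of non-minimal realizations: the classical positive-real lemma genuinely fails without minimality (the paper itself remarks after Corollary~\ref{corsPH1} that solvability of \eqref{eq:LMI1} requires extra hypotheses in the descriptor case), so your proposed extension ``on the remaining asymptotically stable modes'' needs an explicit Lyapunov-equation completion argument showing the extended $X_1$ keeps the inequality strict. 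Likewise the block assembly of $X$ in Weierstra\ss{} coordinates must be checked to satisfy both $E^TX=X^TE\succeq 0$ and the strict LMI after undoing the (generally non-orthogonal) equivalence transformation, which conjugates the LMI by a congruence only if set up carefully. Since the paper merely invokes \cite{ZhaLX02} here, deferring to that reference is legitimate; but as a standalone proof your converse direction is incomplete at exactly these points.
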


Using Theorem~\ref{cor1} and Theorem~\ref{thm:espr_equi}, we prove the following result.
\begin{theorem}\label{thm:ph_necc}
Every PH system in the form~\eqref{eq:phsystem} with a positive definite cost matrix
is admissible and ESPR.
\end{theorem}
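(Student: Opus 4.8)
The plan is to exhibit an explicit solution to the strict LMIs~\eqref{eq:LMI2} and then invoke Theorem~\ref{thm:espr_equi}, which characterizes admissible ESPR systems with $\tilde D+\tilde D^T\succ 0$ precisely by the solvability of those LMIs. Concretely, I would write the given PH system~\eqref{eq:phsystem} in the notation of~\eqref{eq1:sys}, so that $A=(J-R)Q$, $B=F-P$, $C=(F+P)^TQ$, $D=S+N$, with $J^T=-J$, $N^T=-N$, $Q$ invertible, $E^TQ=Q^TE\succeq 0$, and cost matrix $K=\mat{cc}R & P\\P^T & S\rix\succ 0$. Then I would reuse verbatim the computation from the proof of Theorem~\ref{cor1}: taking $X=Q$,
\[
\mat{cc} A^TX + X^TA & X^TB - C^T \\ B^TX - C & -D-D^T \rix = -2 \mat{cc} Q^T & 0 \\ 0 & I_m \rix K \mat{cc} Q & 0 \\ 0 & I_m \rix .
\]

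The key observation is that, because $Q$ is invertible, the block-diagonal matrix $\mat{cc} Q & 0 \\ 0 & I_m \rix$ is invertible, so the right-hand side above is a congruence transform of $-2K$; since $K\succ 0$, this matrix is \emph{negative definite}. Hence $X=Q$ satisfies the first (strict) inequality in~\eqref{eq:LMI2}, while $E^TX=X^TE=E^TQ\succeq 0$ holds by the defining properties of a PH system, so $X=Q$ also satisfies the second condition. Thus the LMIs~\eqref{eq:LMI2} are solvable, and Theorem~\ref{thm:espr_equi} immediately gives that the system is admissible, ESPR, and satisfies $D+D^T\succ 0$.

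There is essentially no hard step once one notices that the matrix $X=Q$ already used for Theorem~\ref{cor1} works again here; the only point worth emphasizing is that, \emph{unlike} in Theorem~\ref{cor1} — whose proof remained valid even for singular $Q$ — the invertibility of $Q$ is genuinely needed now, since it is exactly what upgrades the inequality $-2K\preceq 0$ to the strict inequality required by~\eqref{eq:LMI2} after the congruence. One might also worry about having to check regularity of the pencil $zE-A$ before applying Theorem~\ref{thm:espr_equi}, but this is unnecessary: regularity is part of the ``admissible'' conclusion, which the theorem delivers from the mere existence of a solution to~\eqref{eq:LMI2}.
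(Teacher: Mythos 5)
Your proof is correct and matches the paper's argument exactly: both take $X=Q$, reuse the congruence computation from Theorem~\ref{cor1}, use the invertibility of $Q$ to upgrade $\preceq 0$ to $\prec 0$, and conclude via Theorem~\ref{thm:espr_equi}. Your remarks on why invertibility of $Q$ is genuinely needed here (unlike in Theorem~\ref{cor1}) are accurate and consistent with the paper's own observations.
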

\proof
Let $\Sigma=(E,(J-R)Q,(F-P),(F+P)^TQ,S+N)$ be a PH system in the form~\eqref{eq:phsystem}
with the cost matrix $K=\mat{cc}R &P\\P^T &S \rix \succ 0$.
%This implies that $R \succ 0$ hence that the matrix pair $(E,(J-R)Q)$ is regular, of index at most one, and has all eigenvalues in the open left half of the complex plane~\cite[Theorem 4]{GilMS17}. This implies that $\Sigma$ is admissible.
Following the same steps as in Theorem~\ref{cor1}, one can show that $X=Q$ satisfies the LMIs in~\eqref{eq:LMI2} because $K \succ 0$ and $Q$ is invertible.
Hence, by Theorem~\ref{thm:espr_equi}, $\Sigma$ is admissible and ESPR.
\eproof

 We note that the admissibility of the PH system $\Sigma$ in Theorem~\ref{thm:ph_necc} also follows
by the fact that $(E,(J-R)Q)$ is a DH matrix~\cite[Theorem 4]{GilMS17}. 
In order to show that the converse of Theorem~\ref{thm:ph_necc} is also true, we define the \emph{PH-form}
for a system~\eqref{eq1:sys}.

\begin{definition}{\rm
A system $(E,A,B,C,D)$ is said to admit a \emph{port-Hamiltonian form (PH-form)} if
there exists a PH system as defined in~\eqref{eq:phsystem} such that
%an invertible matrix $Q \in \R^{n,n}$ such that $E^TQ=Q^TE \succeq 0$, and matrices $J^T=-J \in \R^{n,n}$,
%$R \in \R^{n,n}$, $F,P \in \R^{n,m}$, $S \in \R^{m,m}$ and $N^T=-N \in \R^{m,m}$ such that
%$K=\mat{cc}R & P\\P^T &S\rix \succeq 0$, and $A,B,C$ and $D$ can be expressed as
\[
A=(J-R)Q,\quad B=F-P, \quad C=(F+P)^TQ, \quad \text{and}\quad D=S+N .
\]
}
\end{definition}

In view of Theorem~\ref{thm:ph_necc}, if $(E,A,B,C,D)$ admits a  PH-form
with positive definite cost matrix, then it is admissible and ESPR.
Similarly, by Theorem~\ref{cor1}, it follows that
every  regular system $(E,A,B,C,D)$ that admits a PH-form is PR.
However, the converse, that is, every PR system admits a PH-form is not true,
as there exist PR systems with $D+D^T \prec 0$; for instance~\eqref{ex1} with $\alpha = -\frac{3}{2}$ and replacing $D=-1/2$.
% ****

We now show that whenever the LMIs~\eqref{eq:LMI1} have an invertible solution,
the system $(E,A,B,C,D)$ admits a PH-form. This will imply that minimal PR standard systems and ESPR systems admit a PH-form (Corollaries~\ref{corsPH1} and \ref{thm:PR_strict_suff}).

\begin{theorem}\label{lem:PH_suff}
Let $\Sigma = (E,A,B,C,D)$ be a system in the form~\eqref{eq1:sys}.
If the LMIs~\eqref{eq:LMI1} have an invertible solution $X \in \R^{n,n}$, then
$\Sigma$ admits a PH-form.
\end{theorem}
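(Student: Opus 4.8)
The plan is to take $Q = X$ and to read off the remaining port-Hamiltonian data from the blocks of $A$, $B$, $C$, $D$; this is the natural generalization of the explicit solution $X = Q$ used in the proof of Theorem~\ref{cor1}. Since $X$ is invertible by assumption and $E^TX = X^TE \succeq 0$ is precisely the second condition in~\eqref{eq:LMI1}, the Hamiltonian requirement $Q^TE = E^TQ \succeq 0$ holds automatically for $Q = X$. It then remains to split $A$, $B$, $C$, $D$ in the way dictated by~\eqref{eq:phsystem}, with the matrix inequality in~\eqref{eq:LMI1} exactly encoding the positive semidefiniteness of the cost matrix.

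First I would set $J - R := AX^{-1}$ with $J = \frac{1}{2}(AX^{-1} - X^{-T}A^T)$ and $R = -\frac{1}{2}(AX^{-1} + X^{-T}A^T)$, so that $J^T = -J$ and $(J-R)Q = AX^{-1}X = A$. The key observation is that $R = -\frac{1}{2} X^{-T}(A^TX + X^TA)X^{-1}$, which is positive semidefinite because the $(1,1)$ block $A^TX + X^TA$ of the matrix in~\eqref{eq:LMI1} is negative semidefinite. Next I would set $F + P := X^{-T}C^T$ and, together with $F - P := B$, obtain $F = \frac{1}{2}(B + X^{-T}C^T)$ and $P = \frac{1}{2}(X^{-T}C^T - B)$; then $F - P = B$ and $(F+P)^TQ = CX^{-1}X = C$ as required. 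Finally I would take $S = \frac{1}{2}(D + D^T)$ and $N = \frac{1}{2}(D - D^T)$, so $N^T = -N$, $S + N = D$, and $S \succeq 0$ since the $(2,2)$ block $-D - D^T$ of the matrix in~\eqref{eq:LMI1} is negative semidefinite.

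The only nontrivial point is the joint condition $K = \mat{cc} R & P \\ P^T & S \rix \succeq 0$. Here I would verify directly, using the formulas above together with the identity $P = -\frac{1}{2} X^{-T}(X^TB - C^T)$ that follows from the definition of $P$, that
\[
K = -\frac{1}{2} \mat{cc} X^{-T} & 0 \\ 0 & I_m \rix \mat{cc} A^TX + X^TA & X^TB - C^T \\ B^TX - C & -D - D^T \rix \mat{cc} X^{-1} & 0 \\ 0 & I_m \rix .
\]
Since the middle matrix is exactly the one appearing in~\eqref{eq:LMI1}, it is negative semidefinite, and $\mat{cc} X^{-1} & 0 \\ 0 & I_m \rix$ is invertible, so this congruence yields $K \succeq 0$. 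Collecting $(E, Q, J, R, F, P, S, N)$ then exhibits a PH system in the form~\eqref{eq:phsystem} whose associated matrices are exactly $(E, A, B, C, D)$, i.e.\ a PH-form for $\Sigma$. I expect the main obstacle to be purely bookkeeping: keeping the block identities and signs straight; conceptually the whole argument reduces to the congruence identity displayed above, which is the same computation that appears in the proof of Theorem~\ref{cor1}.
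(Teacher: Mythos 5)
Your proof is correct and takes essentially the same route as the paper: set $Q=X$, split $AX^{-1}$, $B$, $X^{-T}C^T$ and $D$ into their symmetric and skew-symmetric parts, and deduce $K\succeq 0$ from exactly the block congruence of the LMI matrix in~\eqref{eq:LMI1} that you display. If anything, your bookkeeping of transposes (e.g.\ $F+P=X^{-T}C^T$ so that $(F+P)^TQ=C$) is more careful than the paper's own statement of the formulas.
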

\proof Let $X$ be an invertible solution of the LMIs~\eqref{eq:LMI1}.
%\begin{equation}\label{eq:lem_ph-suff}
%\mat{cc} A^TU +U A & UB-C^T \\B^TU-C & -D-D^T \rix \preceq 0.
%\end{equation}
Define
\begin{equation*}
 J:=\frac{AX^{-1}-(AX^{-1})^T}{2}, \quad R:=-\frac{AX^{-1}+(AX^{-1})^T}{2}, \quad Q:=X, \quad
S:=\frac{1}{2}(D+D^T), \vspace{-0.4cm}
\end{equation*}
\begin{equation}
 N:=\frac{1}{2}(D-D^T),\quad F:=\frac{1}{2}(B+X^{-1}C^T), \quad \text{and}\quad
P:=\frac{1}{2}(-B+X^{-1}C^T).  \label{DHformcstr}
\end{equation}
 Let us show that the matrices $J,R,Q,F,P,N$ and $S$ provide a PH-form for $\Sigma$. We have
\[
(J-R)Q=A,\quad F-P=B, \quad (F+P)^TQ =C,\quad \text{and}\quad S+N=D.
\]
Further, we have that $E^TQ \succeq 0$ (using the second LMI in~\eqref{eq1:sys}), $J^T=-J$, $N^T=-N$, and
\begin{eqnarray*}
K&=&\mat{cc}R &P\\P^T & S \rix = -\frac{1}{2}\mat{cc} AX^{-1}+X^{-1}A^T & -B+X^{-1}C^T \\ -B^T+CX^{-1} & -D-D^T \rix \\
&=& -\frac{1}{2}\mat{cc} -X^{-1} & 0 \\0& I_m \rix
\mat{cc} A^TX +X A & XB-C^T \\B^TX-C & -D-D^T \rix
\mat{cc} -X^{-1} & 0 \\0& I_m \rix
\succeq 0,
\end{eqnarray*}
which follows from the first LMI in~\eqref{eq:LMI1}.
%This implies that $\Sigma$ admits a  .
\eproof

For standard systems,
\cite[Corollary~2]{BeaMX15_ppt} shows that
every minimal PR system $(I_n,A,B,C,D)$ is equivalent to a system in PH-form, that is,
there exist invertible matrices  $T \in \R^{n,n}$ and $V \in \R^{m,m}$ such that the transformed system
\[
(I_n,T^{-1}AT,T^{-1}BV,V^TCT,V^TDV)
\]
 admits a PH-form.
Theorem~\ref{lem:PH_suff} implies a stronger result: a minimal PR standard system itself admits a  PH-form (no transformation is necessary).

\begin{corollary} \label{corsPH1}
If the system  $(I_n,A,B,C,D)$ is
minimal and PR, then it admits a  PH-form.
\end{corollary}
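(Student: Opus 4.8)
The plan is to deduce Corollary~\ref{corsPH1} directly from Theorem~\ref{lem:PH_suff}. By that theorem it suffices to produce an \emph{invertible} solution $X$ of the LMIs~\eqref{eq:LMI1}; the PH-form is then given explicitly by~\eqref{DHformcstr}. So the whole task reduces to: for a minimal PR standard system $(I_n,A,B,C,D)$, the LMIs~\eqref{eq:LMI1} admit an invertible (indeed positive definite) solution.

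First I would invoke the classical positive real lemma for minimal standard systems \cite{AndV73}: since $(I_n,A,B,C,D)$ is PR and minimal, there exists $X=X^T \succ 0$ satisfying
\begin{equation*}
\mat{cc} A^TX+XA & XB-C^T \\ B^TX-C & -D-D^T \rix \preceq 0 .
\end{equation*}
(Here $E=I_n$, so the second condition $E^TX=X^TE \succeq 0$ in~\eqref{eq:LMI1} is just $X=X^T \succeq 0$, which holds.) Since $X \succ 0$, $X$ is in particular invertible, so the hypothesis of Theorem~\ref{lem:PH_suff} is met, and we conclude that $(I_n,A,B,C,D)$ admits a PH-form.

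The only subtlety worth spelling out is that the positive real lemma as usually quoted gives a symmetric positive \emph{semi}definite solution, and minimality is precisely what upgrades this to positive definiteness (equivalently, invertibility); this is exactly the controllability/observability hypothesis used in~\cite{AndV73} and \cite[pp.~174--175]{DesV75}. I would state this as the one nontrivial input and cite it rather than reprove it. No genuine obstacle remains: everything else is bookkeeping — checking that with $E=I_n$ the second LMI in~\eqref{eq:LMI1} is automatic for $X \succeq 0$, and then applying Theorem~\ref{lem:PH_suff} verbatim. The proof is therefore a two-line argument: positive real lemma gives $X \succ 0$, hence $X$ invertible, hence Theorem~\ref{lem:PH_suff} applies.
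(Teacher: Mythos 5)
Your proposal is correct and follows essentially the same route as the paper: the paper's proof is exactly ``positive real lemma for minimal PR standard systems (here cited as \cite[p.363]{Wil72}) guarantees an invertible solution $X$ of the LMIs~\eqref{eq:LMI1}, then apply Theorem~\ref{lem:PH_suff}.'' Your additional remarks --- that minimality is what upgrades semidefiniteness to definiteness, and that the second LMI is automatic when $E=I_n$ and $X\succeq 0$ --- are accurate and merely make explicit what the paper leaves implicit.
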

\proof
This follows from the positive real lemma for minimal PR standard systems~\cite[p.363]{Wil72} (which guarantees the existence of an invertible solution $X$ of the LMIs~\eqref{eq:LMI1}) and Theorem~\ref{lem:PH_suff}.
\eproof

We note that as opposed to a standard PR system~\cite[p.363]{Wil72}, minimality of a PR
system does not guarantee the solvability of the LMIs~\eqref{eq:LMI1} in the descriptor case.
For this to hold, an additional condition
that $D+D^T \succeq \lim_{s \rightarrow \infty} \left(G(s)+G(s)^T\right)$
is also needed~\cite[Theorem 3.2]{FreJ04}.

\begin{corollary} \label{thm:PR_strict_suff}
Every admissible and ESPR dynamical system $(E,A,B,C,D)$ with $D+D^T \succ 0$ admits a  PH-form with positive definite cost matrix.
\end{corollary}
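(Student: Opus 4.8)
The plan is to deduce Corollary~\ref{thm:PR_strict_suff} from Theorem~\ref{thm:espr_equi} together with Theorem~\ref{lem:PH_suff}. The starting point is the hypothesis: $(E,A,B,C,D)$ is admissible, ESPR, and satisfies $D+D^T \succ 0$. By the ``only if'' direction of Theorem~\ref{thm:espr_equi}, there then exists a solution $X \in \R^{n,n}$ to the strict LMIs~\eqref{eq:LMI2}. The key observation is that any such $X$ must be invertible: indeed, the $(1,1)$ block of~\eqref{eq:LMI2} gives $A^TX + X^TA \prec 0$ (it is a principal submatrix of a negative definite matrix, hence negative definite itself), so if $Xv = 0$ for some vector $v$, then $v^T(A^TX+X^TA)v = 0$, contradicting negative definiteness unless $v=0$. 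Hence $X$ is invertible.

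Having established that $X$ is an invertible solution of~\eqref{eq:LMI2}, and noting that a solution of the strict LMIs~\eqref{eq:LMI2} is a fortiori a solution of the non-strict LMIs~\eqref{eq:LMI1}, we may apply Theorem~\ref{lem:PH_suff} to conclude that $(E,A,B,C,D)$ admits a PH-form, with the matrices $J,R,Q,S,N,F,P$ defined as in~\eqref{DHformcstr}. It remains to check that the associated cost matrix $K = \mat{cc} R & P \\ P^T & S \rix$ is positive definite, not merely positive semidefinite. This follows by revisiting the final computation in the proof of Theorem~\ref{lem:PH_suff}: there,
\[
K = -\frac{1}{2}\mat{cc} -X^{-1} & 0 \\ 0 & I_m \rix
\mat{cc} A^TX + X^TA & X^TB - C^T \\ B^TX - C & -D-D^T \rix
\mat{cc} -X^{-1} & 0 \\ 0 & I_m \rix .
\]
Now the middle matrix is $\prec 0$ by~\eqref{eq:LMI2} (rather than merely $\preceq 0$), and the outer factor $\mat{cc} -X^{-1} & 0 \\ 0 & I_m \rix$ is invertible since $X$ is. A congruence by an invertible matrix preserves strict definiteness, so $K \succ 0$. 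This completes the argument.

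I do not anticipate a genuine obstacle here; the corollary is essentially a packaging of the two cited theorems. The one point that requires a small argument rather than a pure citation is the invertibility of the LMI solution $X$ — Theorem~\ref{thm:espr_equi} only asserts existence of some solution — but this is handled by the negative-definiteness-of-the-$(1,1)$-block observation above. (Alternatively, one could invoke the fact that admissibility of $\Sigma$ implies asymptotic stability of $(E,A)$, which forces $A$, and hence $AX^{-1}$-type Lyapunov solutions, to behave well; but the direct block argument is cleaner.) Everything else is a matter of transporting the strict inequality through the explicit construction~\eqref{DHformcstr} already verified in Theorem~\ref{lem:PH_suff}.
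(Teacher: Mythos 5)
Your proof is correct and follows essentially the same route as the paper: invoke Theorem~\ref{thm:espr_equi} to obtain a solution $X$ of the strict LMIs, deduce invertibility of $X$ from $A^TX+X^TA \prec 0$, and then rerun the construction and congruence computation of Theorem~\ref{lem:PH_suff} with the strict inequality to get $K \succ 0$. The only difference is that you spell out the invertibility argument and the final congruence step in more detail than the paper, which simply asserts them.
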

\proof By Theorem~\ref{thm:espr_equi}, there exists a solution $X$ to the LMIs
\begin{equation}\label{proof:suff_ESPR}
\mat{cc} A^TX +X^T A & X^TB-C^T \\B^TX-C & -D-D^T \rix \prec 0
\quad \text{and} \quad E^TX=X^TE \succeq 0.
\end{equation}
This implies that $A^TX +X^T A \prec 0$, and therefore $X$ is invertible.
The remainder of the proof follows using the same arguments that of Theorem~\ref{lem:PH_suff} with
the solution $X$ of the LMIs~\eqref{proof:suff_ESPR}.
\eproof

In the following, we summarize several equivalent characterizations of a system
to be admissible and ESPR. %For us, the PH-form characterization is most useful in computing the various distances to positive realness.

\begin{theorem}\label{thm:main_result}
Let $\Sigma=(E,A,B,C,D)$ be a system in the form~\eqref{eq1:sys}. Then the following are equivalent.
\begin{enumerate}
\item  $\Sigma$ is admissible and ESPR with $D+D^T \succ 0$.
%\item $\Sigma$ is admissible and strictly passive with $D+D^T \succ 0$.
\item There exists a solution $X$ to the LMIs
\begin{equation*}\label{eq:LMI3}
\mat{cc} A^TX +X^T A & X^TB-C^T \\B^TX-C & -D-D^T \rix \prec 0
\quad \text{and} \quad E^TX=X^TE \succeq 0.
\end{equation*}
\item $\Sigma$ admits a  PH-form with positive definite cost matrix.
\end{enumerate}
\end{theorem}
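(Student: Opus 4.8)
The plan is to establish the three equivalences as a cycle of implications $(1)\Rightarrow(2)\Rightarrow(3)\Rightarrow(1)$, since every piece has already been proved in the course of the section and only needs to be assembled. First, the implication $(1)\Rightarrow(2)$ is exactly the ``only if'' direction of Theorem~\ref{thm:espr_equi} (from \cite{ZhaLX02}): if $\Sigma$ is admissible, ESPR, and $D+D^T\succ 0$, then the strict LMIs~\eqref{eq:LMI2} have a solution $X$, which is precisely statement~(2). Second, for $(2)\Rightarrow(3)$ I would note that a solution $X$ of the strict LMI~\eqref{eq:LMI2} necessarily satisfies $A^TX+X^TA\prec 0$ (it is the top-left block of a negative definite matrix), and a matrix whose symmetric-like part $A^TX + X^TA$ is negative definite must be invertible — otherwise $Xv=0$ for some $v\neq 0$ would force $v^T(A^TX+X^TA)v = 0$, contradicting definiteness. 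Hence $X$ is an invertible solution of~\eqref{eq:LMI1} (a fortiori), so Theorem~\ref{lem:PH_suff} applies and $\Sigma$ admits a PH-form; moreover the explicit cost matrix $K$ constructed in that proof equals $-\tfrac12\,\mathrm{diag}(-X^{-1},I_m)\,M\,\mathrm{diag}(-X^{-1},I_m)$ where $M$ is the LMI block matrix, and since here $M\prec 0$ and the congruence is by an invertible matrix, we get $K\succ 0$. This is the content already carried out in Corollary~\ref{thm:PR_strict_suff}, so I can simply cite it. Third, $(3)\Rightarrow(1)$ is exactly Theorem~\ref{thm:ph_necc}: a PH system with positive definite cost matrix is admissible and ESPR (and $D+D^T = S+S^T = 2S\succ 0$ since $S\succ 0$ follows from $K\succ 0$).

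Assembling these three one-line implications closes the cycle and proves the theorem. Concretely, I would write: ``$(1)\Rightarrow(2)$ is Theorem~\ref{thm:espr_equi}. $(2)\Rightarrow(3)$: the first LMI forces $A^TX+X^TA\prec 0$, hence $X$ is invertible, so Corollary~\ref{thm:PR_strict_suff} (equivalently Theorem~\ref{lem:PH_suff} together with the strictness bookkeeping) yields a PH-form with $K\succ 0$. $(3)\Rightarrow(1)$ is Theorem~\ref{thm:ph_necc}.''

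I do not expect any genuine obstacle here, since this theorem is explicitly billed as a summary of the results already established in the section; the only point requiring a word of care is the invertibility argument for $X$ in $(2)\Rightarrow(3)$ and the observation that the congruence transformation in the proof of Theorem~\ref{lem:PH_suff} preserves strict definiteness, turning $M\prec 0$ into $K\succ 0$. Both are routine, but worth stating so that the logical chain does not implicitly lean on Corollary~\ref{thm:PR_strict_suff} in a circular way (it does not: that corollary was itself proved from Theorem~\ref{thm:espr_equi} and Theorem~\ref{lem:PH_suff}, so citing it is legitimate). If one prefers a fully self-contained proof of $(2)\Rightarrow(3)$ rather than a citation, simply reproduce the three-line definitions of $J,R,Q,S,N,F,P$ from~\eqref{DHformcstr} and verify $(J-R)Q=A$, $F-P=B$, $(F+P)^TQ=C$, $S+N=D$, $E^TQ\succeq 0$, and $K\succ 0$ via the displayed congruence — exactly as in Theorem~\ref{lem:PH_suff} but with ``$\succeq$'' upgraded to ``$\succ$'' everywhere because the relevant LMI is now strict.
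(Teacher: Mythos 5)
Your proposal is correct and takes essentially the same route as the paper, whose proof is a one-line assembly of Theorem~\ref{thm:espr_equi} ($1\Leftrightarrow 2$), Corollary~\ref{thm:PR_strict_suff} ($1\Rightarrow 3$, itself proved via the invertibility of $X$ and the construction of Theorem~\ref{lem:PH_suff}), and Theorem~\ref{thm:ph_necc} ($3\Rightarrow 1$). Your reorganization into the cycle $(1)\Rightarrow(2)\Rightarrow(3)\Rightarrow(1)$ and your explicit invertibility and congruence arguments are exactly the bookkeeping the paper delegates to those cited results.
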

\proof
This follows from Theorems~\ref{thm:espr_equi} and~\ref{thm:ph_necc}, and Corollary~\ref{thm:PR_strict_suff}.
\eproof

%\begin{remark}{\rm
In the next section, we reformulate the nearest ESPR system problem $(\mathcal P_e)$ using the
PH-form for an admissible ESPR system
$(E,A,B,C,D)$ with $D+D^T \succ 0$.
As mentioned in Section~\ref{posrelsys}, for a standard ESPR system $(I_n,A,B,C,D)$ we have that $D+D^T \succ 0$,
thus the condition $D+D^T \succ 0$ for standard systems is redundant.
However, the PH-form characterization of an admissible ESPR descriptor system depends
on the existence of a solution of the LMIs~\eqref{eq:LMI2} when $D+D^T \succ 0$.
This justifies the restriction $D+D^T \succ 0$ on defining the set $\mathbb S_e$ for the nearest ESPR system problem
in Section~\ref{sec:theproblem}.
%}
%\end{remark}

\section{Reformulation of the nearest PR system problems} \label{reformu}

In this section, we exploit the results obtained in the previous section and
present a new framework based on PH systems to attack $(\mathcal P)$ and $(\mathcal P_e)$
defined in Section~\ref{sec:theproblem}, as well as their variants for standard systems. \\

%To simplify notation, let us define the following set
%\[
 %= \Big\{ (J,R,Q,S,M) \in (\mathbb{R}^{n,n})^4 \; | \; J^T=-J, M^T Q \succeq 0,
%K = \mat{cc} R & P \\
%P^T & S \rix \succeq 0 \Big\} .
%\]
Let us define the following two sets:
\begin{itemize}
\item The set $\mathbb S_{PH}$
%\subseteq \R^{n,n}\times \R^{n,n}\times \R^{n,m}\times \R^{m,n}\times \R^{m,m}$,
containing all systems $(E,A,B,C,D)$ in PH-form, that is,
\begin{eqnarray*}
\mathbb S_{PH} &:=& \left\{(E,A,B,C,D) \; | \ (E,A,B,C,D) \text{ admits a PH-form}\right\} \\
&=& \Big\{ (E,(J-R)Q,F-P,(F+P)^TQ,S+N) \; \Big|  \;
J^T=-J, N^T=-N, \\
&& \hspace{3cm} E^TQ \succeq 0, Q \text{ invertible},
K=\mat{cc}R &P \\
P^T & S \rix \succeq 0 \Big\}.
\end{eqnarray*}
%By Theorem~\ref{cor1}, every system in PH-form is PR but the converse is not known hence
%$\mathbb S_{PH} \subseteq \mathbb S$.

\item The set $S_{PH}^{\succ 0} \subset \mathbb S_{PH}$
%\subseteq \R^{n,n}\times \R^{n,n}\times \R^{n,m}\times \R^{m,n}\times \R^{m,m}$,
containing all systems $(E,A,B,C,D)$ in strict PH-form, that is,
\begin{eqnarray*}
   \mathbb S_{PH}^{\succ 0}  := \Big\{(E,(J-R)Q,F-P,(F+P)^TQ,S+N) \in \mathbb S_{PH}
	\; \Big| \; K % =\mat{cc}R &P \\P^T & S \rix
	\succ 0 \Big\}.
\end{eqnarray*}
By Theorem~\ref{thm:main_result}, $\mathbb S_e =  \mathbb S_{PH}^{\succ 0}$.
\end{itemize}

We have discussed in Section~\ref{sec:keyresults} that the set $\mathbb S_e$ of all
ESPR systems is neither open nor closed and clearly the PH characterization of
$\mathbb S_e$ does not change this.
In fact, the sets $\mathbb S_{PH}$ and $\mathbb S_{PH}^{\succ 0}$ are neither closed (due to the constraint that $Q$ is invertible)
nor open (due to the constraint $E^TQ \succeq 0$).

Since we want to work with a set onto which it is easy (and possible) to project,
 we consider the closure $\overline{\mathbb S_{PH}}$ of $\mathbb S_{PH}$ which is equal to the set $\mathbb S_{PH}$ except that $Q$ can be singular. Moreover, we have that $\overline{\mathbb S_{PH}} = \overline{\mathbb S_{PH}^{\succ 0}}$.
Therefore the values of the infimum over the sets $\mathbb S_{PH}$, $\mathbb S_{PH}^{\succ 0}$, and
$\overline{\mathbb S_{PH}}$ are the same. We have the following result.
%These observations lead to the following formulation.
\begin{theorem}\label{thm:reform_in_ph}
Let $(E,A,B,C,D)$ be a system in the form~\eqref{eq1:sys} and $\mathcal{F}$ be defined as in~\eqref{eq:def_F}.
Then
\begin{equation} \label{eq:reform_in_ph}
%\inf_{ (\tilde E,\tilde A,\tilde B,\tilde C, \tilde D) \in \mathbb S}
%\mathcal{F}(\tilde A,\tilde B,\tilde C,\tilde D,\tilde E)
%\; \leq \;
\inf_{ (M,(J-R)Q,F-P,(F+P)^TQ,S+N) \in \overline{\mathbb S_{PH}} }  \quad
\mathcal{F}((J-R)Q,F-P,(F+P)^TQ,S+N,M)
\end{equation}
coincides with the infimum of $(\mathcal P_e)$  while it is
is an upper bound for the infimum of $(\mathcal P)$.
%Moreover, every feasible solution of~\eqref{eq:reform_in_ph} is a PR system.
\end{theorem}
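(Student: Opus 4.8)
The plan is to prove the two assertions separately, using the set identities $\mathbb S_e = \mathbb S_{PH}^{\succ 0}$ (from Theorem~\ref{thm:main_result}) and $\overline{\mathbb S_{PH}} = \overline{\mathbb S_{PH}^{\succ 0}}$ (established just above the statement), together with the general fact that the infimum of a continuous function over a set equals its infimum over the closure of that set. First I would handle the ESPR claim. Since $\mathcal F$ is continuous (it is a sum of squared Frobenius norms, hence a polynomial in the entries of $(M,J,R,Q,F,P,S,N)$ composed with the continuous products defining $A,B,C,D$), and since $\mathbb S_{PH}^{\succ 0} \subseteq \overline{\mathbb S_{PH}^{\succ 0}} = \overline{\mathbb S_{PH}}$ with $\mathbb S_{PH}^{\succ 0}$ dense in its closure, we get
\[
\inf_{\overline{\mathbb S_{PH}}} \mathcal F \;=\; \inf_{\mathbb S_{PH}^{\succ 0}} \mathcal F .
\]
By Theorem~\ref{thm:main_result}, $\mathbb S_{PH}^{\succ 0}$ is precisely the set of systems $(\tilde E,\tilde A,\tilde B,\tilde C,\tilde D)$ that are admissible and ESPR with $\tilde D + \tilde D^T \succ 0$, i.e.\ exactly $\mathbb S_e$; parametrizing each such system by a choice of $(M,J,R,Q,F,P,S,N)$ realizing its PH-form shows that the right-hand infimum above equals the infimum of $(\mathcal P_e)$. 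One point to be careful about: the map from PH-data to systems is many-to-one, but that is harmless because we are taking an infimum over all realizations, so the two infima range over the same set of attained values of $\mathcal F$.

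For the $(\mathcal P)$ claim I would argue by inclusion. Every system admitting a PH-form with $Q$ possibly singular — that is, every element of $\overline{\mathbb S_{PH}}$ viewed as a quintuple $(M,(J-R)Q,F-P,(F+P)^TQ,S+N)$ — is PR whenever it is regular, by Theorem~\ref{cor1} (whose proof, as the remark after it stresses, does not use invertibility of $Q$, only $E^TQ = Q^TE \succeq 0$ and $K \succeq 0$, so $X = Q$ still solves the LMIs~\eqref{eq:LMI1} and Theorem~\ref{thm:suff_PR} applies). Hence, modulo the regularity caveat, $\overline{\mathbb S_{PH}}$ feeds into the feasible set $\mathbb S$ of $(\mathcal P)$, so
\[
\inf_{(\mathcal P)} \mathcal F \;=\; \inf_{(\tilde E,\tilde A,\tilde B,\tilde C,\tilde D) \in \mathbb S} \mathcal F \;\le\; \inf_{\overline{\mathbb S_{PH}}} \mathcal F ,
\]
which is the desired upper bound. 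A cleaner way to dispose of the regularity issue is to pass through the dense subset $\mathbb S_{PH}^{\succ 0} \subseteq \mathbb S_e \subseteq \mathbb S$: since $\mathbb S_{PH}^{\succ 0}$ is dense in $\overline{\mathbb S_{PH}}$ and consists of genuine PR (indeed ESPR) systems, $\inf_{\mathbb S} \mathcal F \le \inf_{\mathbb S_{PH}^{\succ 0}} \mathcal F = \inf_{\overline{\mathbb S_{PH}}} \mathcal F$, avoiding any need to discuss PR-ness of boundary points with singular $Q$.

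The main obstacle is the density/closure bookkeeping: one must check that $\mathbb S_{PH}^{\succ 0}$ really is dense in $\overline{\mathbb S_{PH}}$ (intuitively, perturb $R \leadsto R + \epsilon I_n$, $S \leadsto S + \epsilon I_m$ to make $K \succ 0$, and $Q \leadsto Q + \epsilon I_n$ to make $Q$ invertible while keeping $E^TQ \succeq 0$ — though this last perturbation needs a short argument, e.g.\ choosing the sign/direction of the perturbation of $Q$ compatibly with $E$, or invoking the explicit description of $\overline{\mathbb S_{PH}}$ given in the text), and that $\mathcal F$ is continuous along such perturbations so the infimum is unchanged. Everything else is a routine application of Theorems~\ref{cor1}, \ref{thm:espr_equi}, \ref{lem:PH_suff}, and~\ref{thm:main_result}. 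The reason $(\mathcal P)$ only gets an upper bound rather than equality is that $\mathbb S$ also contains PR systems with $D + D^T$ not positive definite (e.g.\ the example~\eqref{ex1} with $D$ replaced by $-1/2$ and $\alpha = -3/2$), which admit no PH-form, so $\overline{\mathbb S_{PH}}$ need not reach the global PR optimum.
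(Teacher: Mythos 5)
Your proposal is correct and follows essentially the same route as the paper: the paper's proof is a one-line appeal to $\mathbb S_e = \mathbb S_{PH}^{\succ 0}$ (Theorem~\ref{thm:main_result}) and $\mathbb S_e \subseteq \mathbb S$, relying on the closure identity $\overline{\mathbb S_{PH}} = \overline{\mathbb S_{PH}^{\succ 0}}$ and the invariance of the infimum under taking closures, which the paper asserts in the paragraph immediately preceding the theorem. Your additional care about the density bookkeeping and the regularity of boundary points with singular $Q$ is a sound elaboration of details the paper leaves implicit (cf.\ Remark~\ref{re:revision1}), not a different argument.
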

\begin{proof}
This follows directly from the fact that $\mathbb S_e =  \mathbb S_{PH}^{\succ 0}$ and $\mathbb S_e \subseteq \mathbb S$.
%$\overline{\mathbb S_{PH}} \subseteq \mathbb S$ (the proof of Theorem~\ref{cor1} does not require $Q$ to be invertible).
\end{proof}
We will refer to~\eqref{eq:reform_in_ph} as the nearest PH system problem.
The same result holds for the variants of $(\mathcal P)$ and $(\mathcal P_e)$ for standard systems
since the only difference is that $M$ is imposed to be equal to $E=I_n$.

\begin{remark}\label{re:revision1}{\rm
We note that for standard systems we have $\overline{\mathbb S_{PH}} \subseteq \mathbb S$,
this is due to Theorem~\ref{cor1} as its proof does not require $Q$ to be invertible.
In the descriptor case, $\overline{\mathbb S_{PH}}$ could contain systems which are
not regular. This shows that in this case a feasible solution
of~\eqref{eq:reform_in_ph} may not be a PR system. To rule out such situations,
%we rewrite~\eqref{eq:reform_in_ph} into an equivalent optimization problem of the form~\eqref{eq:equivalent1}
one can impose the matrix $R$ to satisfy $R\succeq \delta I_n$ for some fixed
small $\delta >0$. This does not make the problem more complicated as
the projection is still straightforward, but gives a nearby regular
descriptor PH system~\cite[Lemma 1]{GilMS17} (hence a PR system, see Theorem~\ref{cor1})
to a given system.
}
\end{remark}

\begin{remark}\label{re:revision2}{\rm
Although the value of the infimum in~\eqref{eq:reform_in_ph} coincides with the infimum of $(\mathcal P_e)$,
the solution of~\eqref{eq:reform_in_ph} may not solve problem $(\mathcal P_e)$,
as the solution found may not even be PR; see Remark~\ref{re:revision1}.
However, it is possible to obtain a nearby strict PH system (hence
admissible and ESPR system with $D+D^T \succ 0$, see Theorem~\ref{thm:main_result})
by rewriting~\eqref{eq:reform_in_ph} %into an equivalent form~\eqref{eq:equivalent1} and
using lower bounds on the eigenvalues of matrix $Z$ and $K$; see also Remark~\ref{rem:strictPR}.
}
\end{remark}

\section{Algorithmic solution to the nearest PH system problem} \label{algosol}

In this section, we propose an algorithm to tackle~\eqref{eq:reform_in_ph}.
We analyze separately standard systems when $E=I_n$ and $E$ is not subject to perturbation,
and general systems when $E$ is subject to perturbation.

\subsection{Standard systems} \label{stansys}
% The distance $dist'_{PH}$ defined in~\eqref{def:dist_sph} is given by

For standard systems, $M=E=I_n$ and~\eqref{eq:reform_in_ph} can be simplified as follows
{\small{\begin{align}
 \inf_{J,R,Q,F,P,S,N} & %\mathcal{F}((J-R)Q,F-P,(F+P)^TQ,S+N,I_n)
 {\|A-(J-R)Q\|}_F^2 + {\|B-(F-P)\|}_F^2+{\|C-(F+P)^TQ\|}_F^2+{\|D-(S+N)\|}_F^2
\nonumber \\
& \text{ such that } \quad J^T=-J, Q\succeq 0, N^T=-N \text{ and } \mat{cc}R &P\\P^T&S\rix\succeq 0 . \label{def:dist_sph_1}
\end{align}}}
For any given square matrix $Z$ and any skew-symmetric matrix $X$, we have
\begin{equation}\label{eq:nearestskew-sym}
{\left\|Z-X\right\|}_F^2= {\left\|\frac{(Z+Z^T)}{2}+\frac{(Z-Z^T)}{2}-X\right\|}_F^2= {\left\|\frac{(Z+Z^T)}{2}\right\|}_F^2 +
{\left\|\frac{(Z-Z^T)}{2}-X\right\|}_F^2,
\end{equation}
since symmetric and skew-symmetric matrices are orthogonal (their inner product is zero).
%\left\langle \right\rangle
Therefore the infimum in~\eqref{eq:nearestskew-sym} over all skew-symmetric matrices $X$ is attained when $X=\frac{Z-Z^T}{2}$, that is,
\begin{equation} \label{projjmjt}
\min_{X^T=-X} {\|Z-X\|}_F
\quad = \quad
 {\left\|Z -  \frac{(Z-Z^T)}{2}   \right\|}_F .
\end{equation}
This implies that the optimal $N$ in \eqref{def:dist_sph_1}
is given by $\frac{D-D^T}{2}$ since $S$ is symmetric, so that \eqref{def:dist_sph_1} can be simplified to
\begin{align}
\inf_{J,R,Q,F,P,S}
& \mathcal{G}(J,R,Q,F,P,S) \quad \text{ such that }
\quad J^T=-J, Q\succeq 0 \text{ and } \mat{cc}R &P\\P^T&S\rix\succeq 0 ,  \label{def:dist_sph_2}
\end{align}
where
\begin{align*}
\mathcal{G}(J,R,Q,F,P,S)
& = {\|A-(J-R)Q\|}_F^2 + {\|B-(F-P)\|}_F^2 \\
& \quad \quad + {\|C-(F+P)^TQ\|}_F^2+{\left\|\frac{D+D^T}{2}-S\right\|}_F^2 .
\end{align*}
%\begin{eqnarray}
%dist'_{PH}  &:=& \inf_{J^T=-J,R,Q\succeq 0, F,P,S,\mat{cc}R &P\\P^T&S\rix\succeq 0}
 %{\|A-(J-R)Q\|}_F^2+{\|B-(F-P)\|}_F^2+ \nonumber\\
%& & \qquad  \qquad  {\|C-(F+P)^TQ\|}_F^2+{\|\frac{D+D^T}{2}-S\|}_F^2.
%\end{eqnarray}
Similarly as it was done in~\cite{GilS16} to find the nearest stable matrix to an unstable one,
we develop a fast projected gradient method (FGM) to solve~\eqref{def:dist_sph_2}.
FGM has the advantage to be in general much faster than the standard projected gradient method~\cite{nes83} \cite[p.90]{Nes04}
(see Section~\ref{numexp} for some examples), even in the non-convex case~\cite{ONW17}, while being relatively simple to implement as long as one can
\begin{itemize}
\item Compute the gradient: all the terms in the objective function are of the form $f(X) = {\|AX-B\|}_F^2$
whose gradient is $\nabla_X f(X) = 2A^T(AX-B)$.

\item Project onto the feasible set: the projection onto the set $\{X \,:\, X=-X^T\}$ is given in~\eqref{projjmjt}
while projection onto the set of positive semidefinite matrices can be computed using an eigenvalue decomposition~\cite{Hig88b}.
\end{itemize}

For the sake of completeness, we describe in Algorithm~\ref{fastgrad} the variant of FGM we use.
Note that we have included a restarting procedure which is necessary in our case since the objective function is not convex hence FGM without restart is not guaranteed to converge~\cite{GL16}. We have observed that, in most cases, Algorithm~\ref{fastgrad} does not need to restart very often
 (on the numerical examples presented in Section~\ref{numexp}, it restarts in average less than every 1000 iterations). 
 \algsetup{indent=2em}
\begin{algorithm}[ht!]
\caption{Fast Gradient Method (FGM) with Restart} \label{fastgrad}
\begin{algorithmic}[1]
\REQUIRE
The (non-convex) function $f(x)$,
the feasible set $\mathcal{X}$,
an initial guess $x \in \mathcal{X}$,
a parameter $\alpha_1 \in (0,1)$,
lower bound for the step length $\underline{\gamma}$.
%a number of iterations $K$ (other stopping criteria can be used).

\ENSURE An approximate solution $x$ to the problem $\argmin_{z \in \mathcal{X}} f(z)$.  \medskip

\STATE $y = x$ ; initial step length $\gamma > \underline{\gamma}$.

\FOR{$k = 1, 2, \dots$}

\STATE  \emph{\% Keep the previous iterate in memory.} 

\STATE $\hat x = x$. \hspace{1cm}

\STATE \emph{\% Project the gradient step onto $\mathcal{X}$, where $\mathcal{P}_{\mathcal{X}}(z) = \argmin_{\tilde{z} \in \mathcal{X}} \|z-\tilde{z}\|$.
} 

\STATE $x = \mathcal{P}_{\mathcal{X}}  \big(  y - \gamma \nabla f(y) \big)$.

\STATE    \emph{\% Check if the objective function $f$ has decreased, otherwise decrease the step length.} 

\WHILE { $f(x)  > f(\hat x)$ and $ \gamma \geq \underline{\gamma}$ }

\STATE $\gamma = \frac{2}{3} \gamma$.

\STATE $x = \mathcal{P}_{\mathcal{X}}  \big(  y - \gamma \nabla f(y) \big)$.

\ENDWHILE

\STATE  \emph{\% If the step length has reached the lower bound ($f$ could not be decreased from $y$), reinitialize $y$ (the next step will be a regular gradient descent step).} 

\IF { $\gamma < \underline{\gamma}$ }

\STATE Restart fast gradient: $y = x$; $\alpha_{k} = \alpha_{1}$.

\STATE   Reinitialize $\gamma$ at the last value for which it allowed decrease of $f$.

\ELSE

\STATE $\alpha_{k+1} = \frac{1}{2} \left(  \sqrt{ \alpha_{k}^4 + 4 \alpha_{k}^2 } - \alpha_{k}^2 \right)$,
$\beta_k =  \frac{\alpha_{k} (1-\alpha_{k})}{\alpha_{k}^2 + \alpha_{k+1}}$.

\STATE $y = x + \beta_k \left(x - \hat x\right)$.
%\quad where $\beta_k =  \frac{\alpha_{k} (1-\alpha_{k})}{\alpha_{k}^2 + \alpha_{k+1}}$
%with
%$\alpha_{k+1} \geq 0$ s.t.
% $\alpha_{k+1}^2 = (1-\alpha_{k+1}) \alpha_{k}^2$.

\ENDIF

\STATE $\gamma = 2 \gamma$.

\ENDFOR

\end{algorithmic}
\end{algorithm}

In our implementation, we have also added the possibility to give a different importance to each term in the objective function using nonnegative weights $w_i \geq 0$ ($1 \leq i \leq 4)$ and minimize
{\small{\begin{equation}\label{eq:weighted_obj}
w_1 {\|A-(J-R)Q\|}_F^2 + w_2 {\|B-(F-P)\|}_F^2+ w_3 {\|C-(F+P)^TQ\|}_F^2+ w_4 {\left\|\frac{D+D^T}{2}-S\right\|}_F^2 .
\end{equation}}}

\paragraph{Parameter settings}

For the initial step length, we use $\gamma = 1/L$ where $L={\|Q\|}_2^2$ is the Lipschitz constant
of the gradients of $\mathcal{G}$ with respect to $J$ (and $R$). The reason for this choice is that this step length would guarantee the decrease of the objective function if we would only update $J$ (or $R$) since the subproblem in $J$ (and $R$) is convex. Note that the Lipschitz constant
of the gradients of $\mathcal{G}$ with respect to $F$ (resp.\@ $S$) is $L+1$ (resp.\@ 1). Hence, except maybe for the variable $Q$, this value of $\gamma$ has a good order of magnitude while being simple to compute. In fact, computing the Lipschitz constant of the full gradient of $\mathcal{G}$ is nontrivial and computationally more expensive while the Hessian of $\mathcal{G}$ is mostly block diagonal (only the variable $Q$ appears with other variables).
We choose $\alpha_1=0.5$ which seems to work well in most cases, although FGM can be quite sensitive to this parameter and it is sometimes rewarding to try different values. In fact, even in the convex case, there is, as far as we know, no theoretical recommendation on how to choose this value; it is problem dependent.
(We also tried $\alpha_1=0.1, 0.9$ which performed in average slightly worse than $\alpha_1=0.5$.)

\begin{remark}[Closed form for $F$]{\rm
The optimal solution for the variable $F$ in~\eqref{def:dist_sph_2} can be written in closed form:
\begin{align}
\hat F & =
\argmin_{F} {\|B - (F-P)\|}_F^2 + {\|C^T - Q^T(F+P)\|}_F^2 \nonumber \\
 & =  ( I_n + QQ^T )^{-1} \left( P+B + QC^T + QQ^T P \right), \label{formulF}
\end{align}
since
{\small{
\begin{equation*}
\frac{1}{2} \nabla_F ({\|F - (P+B)\|}_F^2 + {\|Q^T F - (C^T + Q^TP)\|}_F^2 )
= (F-(P+B)) + Q(Q^T F - (C^T + Q^TP)).
\end{equation*}}}
However, we did not inject $\hat F$ in~\eqref{def:dist_sph_2} as it makes the objective function very complicated; in particular because of the term
${\|C^T - Q^T(\hat F +P)\|}_F^2$.
}
\end{remark}

\subsection{General systems}  \label{sec:gen_algo}

%We now consider the distance $dist_{PH}$ defined in~\eqref{def:dist_dph}.
Similarly as for standard systems in~\eqref{def:dist_sph_2}, \eqref{eq:reform_in_ph} can be simplified to
\begin{align}
\inf_{J,R,Q,M,F,P,S} & \mathcal{G}(J,R,Q,F,P,S) + {\|E - M\|}_F^2 \label{def:dist_dph_1} \\
& \quad \text{ such that } J^T=-J,
M^T Q \succeq 0
\text{ and }
\mat{cc}R &P\\P^T&S\rix\succeq 0 . \nonumber
\end{align}
As opposed to~\eqref{def:dist_sph_2}, it is difficult to project on the feasible domain of~\eqref{def:dist_dph_1}
because of the coupling constraint $M^T Q \succeq 0$. Moreover, this constraint was observed to get standard optimization schemes stuck in suboptimal solutions; see~\cite[Example 3]{GilMS17} for an example.
Following the strategy used in~\cite{GilMS17}, we introduce the variable $Z = M^T Q$ so that $M^T = Z Q^{-1}$.
This allows us to reformulate~\eqref{def:dist_dph_1} into an equivalent optimization problem with a
simpler feasible set:
\begin{align}\label{eq:equivalent1}
\inf_{J,R,Q,Z,F,P,S} %& \mathca{F}( (J-R)Q, F-P, (F+P)^TQ,  )
& \mathcal{G}(J,R,Q,F,P,S) + {\|E^T - Z Q^{-1} \|}_F^2 \\
& \text{ such that } J^T=-J, Z \succeq 0 \text{ and } \mat{cc}R &P\\P^T&S\rix\succeq 0 ,\nonumber
\end{align}
for which we have implemented Algorithm~\ref{fastgrad} (the gradient of ${\|E^T - Z Q^{-1} \|}_F^2$ with respect to $Q$ is given in~\cite[Appendix~A]{GilMS17}).
As for standard systems, we have also added the possibility to use weights for the different terms in the objective function as
in~\eqref{eq:weighted_obj} adding the term $w_5 {\big\|E^T-ZQ^{-1}\big\|}_F^2$ with $w_5 \geq 0$.
For the initial step length, we use $\gamma = 1/L$ where $L=\max({\|Q\|}_2^2,{\|Q^{-1}\|}_2^2)$ is the maximum between the Lipschitz constants of the gradients of $\mathcal{G}$ with respect to $J$, $R$ and $Z$.

\subsection{Initializations} \label{initsec}

Since we are dealing with non-convex optimization problems, it is expected that choosing good initial points will be crucial to obtain good solutions. This will be confirmed in the numerical experiments. In the next two subsections, we propose several initialization strategies.
We believe that designing other initialization strategies is an important direction for further research; in particular taking advantage of the particular structure of the problem at hand.

\subsubsection{Standard initialization} \label{staninit}

The first initialization, which we refer to as `standard', uses $Q = I_n$ and $P = 0$. For these values of $Q$ and $P$, the optimal solutions for the other variables can be computed explicitly:
\[
J = \big(A-A^T\big)/2,
R = \mathcal{P}_{\succeq} \big((-A-A^T)/2\big),
S = \mathcal{P}_{\succeq} \big((D^T+D)/2\big),
F = \big(B+C^T\big)/2,
\]
and $Z = \mathcal{P}_{\succeq }(E^T)$ for general systems. The notation $\mathcal P_{\succeq }(X)$ stands for the projection of a matrix $X$
on the cone of positive semidefinite matrices.
This initialization has the advantage to be very simple to compute while working reasonably well in many cases;
see Section~\ref{numexp} for numerical experiments.

\subsubsection{LMI-based initializations} \label{LMIinit}

Given a system that does not admit a PH-form, the LMIs~\eqref{eq:LMI1} will not have a solution. However, since we are looking for a nearby system that will admit a solution to these LMIs, it makes sense to try to find a solution $X$ to nearby LMIs.
%There are many possible ways  (see the remark below)
%and
We propose the following to relax the LMIs~\eqref{eq:LMI1}:
\begin{align}
\min_{\delta, X} & \quad \delta^2 \nonumber \\
\text{ such that } & \quad
\mat{cc} -A^TX -X^T A & C^T-X^TB \\ C-B^TX & D+D^T \rix + \delta I_{n+m} \succeq 0,  \label{initLMIs} \\
& \quad  E^T X  + \delta I_n \succeq 0. \nonumber
\end{align}
Let us denote $(\delta^*,X^*)$ an optimal solution of~\eqref{initLMIs}.
By Theorem~\ref{lem:PH_suff}, if $\delta^* = 0$ and $X^*$ is invertible,
then the system $(E,A,B,C,D)$ admits a PH-form that can be constructed explicitly; see~\eqref{DHformcstr}.
Moreover, as long as $X^*$ is invertible, the matrices $(J,R,Q,S,N,P,Z)$ can be constructed using~\eqref{DHformcstr},
and projected onto the feasible set $\overline{\mathbb S_{PH}}$ to obtain an initial system in PH-form.
We will refer to this initialization as `LMIs + formula'.
If one wants to obtain a better initial point, given $Q = X^*$, it possible to compute the matrices $(J,R,S,N,P)$ by solving a semidefinite program (SDP):
\begin{equation}
\min_{J,R,S,N,P} \mathcal{G}(J,R,Q,F,P,S)  \quad \text{ such that } \quad  J^T=-J \text{ and } \mat{cc}R &P\\P^T&S\rix\succeq 0,
\label{initLMIs2}
%\quad {\|A-(J-R)Q\|}_F^2 + {\|B-(F-P)\|}_F^2+{\|C-(F+P)^TQ\|}_F^2+{\left\|\frac{D+D^T}{2}-S\right\|}_F^2  \nonumber \\
%\text{ such that } & \quad \quad J^T=-J \text{ and } \mat{cc}R &P\\P^T&S\rix\succeq 0.  \label{initLMIs2}
\end{equation}
while taking $Z = \mathcal{P}_{\succeq }(E^TQ)$ (as $Q=X^*$ can be ill-conditioned).
We will refer to this initialization as `LMIs + solve'.  By construction, it provides an initial point with smaller objective function value than `LMIs + formula' (at the cost of solving another SDP).

We will see that the LMI-based initializations work well when the initial system is close to being passive (that is, when $\delta^*$ is small); otherwise, it may provide rather bad initial points;  see Section~\ref{numexp} for some examples.
However, in most applications, the systems of interest are usually close to being passive (cf.\@ Introduction) hence we believe these initializations will be particularly useful in practice.

%\begin{remark}[Relaxing the LMIs]{\rm
%Let us consider
%\[
%\mat{cc} -A^TX -X^T A & C^T-X^TB \\ C-B^TX & D+D^T \rix + \Delta_1 \succeq 0, \quad  E^T X  + \Delta_2 \succeq 0.
%\]
%In~\eqref{initLMIs}, $\Delta_1 = \delta I_{m+n}$ and $\Delta_2 = \delta I_n$. However, there are many other ways to relax the LMIs: for example,
%\begin{itemize}
%
%\item $\Delta_1 = \delta_1 I_{m+n}$ and $\Delta_2 = \delta_2 I_n$ for scalars $\delta_1$ and $\delta_2$,
%
%\item $\Delta_1 = \diag(\delta_1)$ and $\Delta_2 = \diag(\delta_2)$ for vectors $\delta_1$ and $\delta_2$, or
%
%\item $\Delta_1$ and $\Delta_2$ being symmetric matrices.
%
%\end{itemize}
% However, we did not observe significant differences in performance between these different approaches hence we presented the simplest one (which actually worked slightly better on the tested examples).
%}
%\end{remark}

\begin{remark}[Finding the nearest stable matrix (pair)]{\rm
Our proposed algorithm to find the nearest PH system is a generalization of the algorithm of~\cite{GilS16} (resp.~\cite{GilMS17}) to find the nearest stable matrix (resp.\@ matrix pair).
In fact, it can be used on the system $(I,A,[\,],[\,],[\,])$ (resp.\@ $(E,A,[\,],[\,],[\,])$), where $[\,]$ is the empty matrix,
to recover a stable approximation of $A$ (resp.\@ $(E,A)$), not allowing (resp.\@ allowing) perturbation in $E$.
However, for the nearest stable matrix problem, the algorithm of this paper does not perform as well because authors in~\cite{GilS16} used an additional heuristic; namely, a scaling of the iterates $(J,R,Q)$ to reduce the Lipschitz constant of the objective function. Improving the performance of our algorithm using heuristics is a topic for further research.

Note that the LMI-based initializations were not introduced in~\cite{GilS16,GilMS17} (only the standard initialization was)
and could be particularly useful to obtain better solutions, especially when the input matrix (pair) is close to being stable.
}
\end{remark}

\section{Numerical experiments} \label{numexp}

Our code is available from \url{https://sites.google.com/site/nicolasgillis/} and the numerical examples presented below can be directly run from this online code. All tests are preformed using Matlab
R2015a on a laptop Intel CORE i7-7500U CPU @2.9GHz 24GB RAM. 
The algorithm runs in $O(n^3)$ operations per iteration (assuming $m \leq n$),
including projections on the set of positive semidefinite matrices, inversion of the matrix $Q$ (for general systems)
and all necessary matrix-matrix products. Hence FGM can be applied on a standard laptop with $n$ and $m$ up to a thousand.
To solve the SDPs~\eqref{initLMIs} and~\eqref{initLMIs2}, we used the interior point method SDPT3 (version 4.0)~\cite{toh1999sdpt3, tutuncu2003solving}, and CVX as a modeling system~\cite{cvx, gb08}. On a standard laptop, it can be solved for $n$ up to about a hundred (it took about 4 minutes to solve a problem~\eqref{initLMIs} with $n=100$ and $m=10$). For larger problems, it would be interesting to use first-order methods. This is a direction of further research. Therefore, as of now, for large systems ($n \gg 100$), FGM can only be used in combination with the standard initialization scheme.

%As far as we know, no other algorithm has been proposed for the nearest PR system problem allowing perturbations in all matrices $(E,A,B,C,D$); see Introduction.
%Therefore, we cannot compare to an existing method.
In the first experiment (Section~\ref{kresex}),
we will compare our FGM with the standard projected gradient method (this is FGM restarted at each iteration) to show that FGM converges significantly faster.
In the second experiment (Section~\ref{schro}), we apply FGM on the small-scale problem from~\cite{WanZKPW10}.
In the third experiment (Section~\ref{msdsec}), we use larger mass-spring-damper systems.
In all cases, we compare the performance of the different initializations strategies from Section~\ref{initsec}.
In the fourth experiment (Section~\ref{randinit}), we compare the deterministic initializations schemes with random initializations, while in the last experiment (Section~\ref{randtest}) we compare the initialization schemes on randomly generated systems.

\subsection{ Standard system from~\cite{BoyBK89} } \label{kresex}

Consider the following standard LTI system $(E,A,B,C,D)$ from~\cite[Section 6]{BoyBK89} where $E=I_4$,
\begin{align}
&A = \mat{cccc}-0.08 & 0.83& 0& 0\\-0.83& -0.08& 0& 0\\0 &0 &-0.7& 9\\0& 0& -9 &-0.7\rix,
B = \mat{cccc}1& 0& 1&0 \\1& 0& -1&0\rix^T, \nonumber \\
&C = \mat{cccc}0.4&0&0.4&0\\0.6&0&1&0\rix, ~\text{and} ~
D = \mat{cc}0.3 &0\\0 &-0.15 \rix . \label{example1}
\end{align}
%and $D= \text{diag}\{0.3,-0.15\}$.
This system is asymptotically stable but not PR because the transfer function $G(s)$ does not satisfy the second condition in the Definition~\ref{def:PR_SPR_ESPR} of PR, e.g., for $s=1+2j$.

Applying FGM on~\eqref{def:dist_sph_2} with the standard initialization, %with a time limit of 10 seconds,
%and perturb $A,B,C$, and $D$ without perturbing the $E$
%m%atrix, then
we obtain (up to 4 digits of accuracy)
{\small{\[
\hat A=\mat{cccc}
   -0.0810  &   0.8300 & -0.4 &  -0.0104\\
   -0.8301  & -0.0799  &  0.0012 &  -0.2\\
   -0.0021  &  0.0013  & -0.8521 &   9.1\\
   -0.0146  & -0.9  & -8.9861 &  -0.8512
\rix,~
\hat B=\mat{cc}
    0.9994  &  1.8\\
    0.1  & -0.9\\
    0.9851  & -0.8691\\
   -0.0070  &  0.2
    \rix,~
    \]
 \[
\hat C=\mat{cccc}
    0.4010 &  -0.1  &  0.4185 &   0.0073\\
    0.5990  &  0.0017 &   0.8281  &  0.0158
   \rix,~
\hat D=\mat{cc}
    0.3089 &  -0.0647\\
   -0.0647 &   0.4318
         \rix.
\]}}
Figure~\ref{fig:ex1} (right) displays the evolution of the objective function:
FGM converges in about half a second while the gradient method requires about 5 seconds. However, both methods converge to the same solution.
This gives a nearby standard PR system with error
\[
{\|A-\hat A\|}_F^2 +{\|B-\hat B\|}_F^2+ {\|C-\hat C\|}_F^2+{\|D-\hat D\|}_F^2 = 0.4411.
\]
In terms of relative error for each matrix, we have
{\small{\begin{equation*}
\frac{{\|A-\hat A\|}_F}{{\|A\|}_F} = 1.68\%,~
\frac{{\|B-\hat B\|}_F}{{\|B\|}_F} = 6.60\%,~
\frac{{\|C-\hat C\|}_F}{{\|C\|}_F} = 13.41\%,~
\frac{{\|D-\hat D\|}_F}{{\|D\|}_F} = 175.62\%.
\end{equation*}}}
Note that the approximation error for $D$ is rather large.
The reason is twofold:
(i) $D$ is indefinite and the symmetric part of $D$ has to be approximated by a PSD matrix (namely, $S$) hence the relative error is at least $\frac{0.15}{\sqrt{0.3^2+0.15^2}} = 44.7\%$ (this error can be obtained by increasing the weight for $D$ in the objective function), and
(ii) the norm of $D$ compared to the other matrices (in particular $A$) is smaller hence it implicitly has less importance in the objective function.

To give more importance to $A$ and $B$, we can choose for example the weights $w_1=7/4$, $w_2= 7/4$, $w_3= 1/4$ and
$w_4=1/4$ in the objective function~\eqref{eq:weighted_obj}.
Doing so, we get another nearby standard PR system with objective function $0.13$,
and with the following relative error for each matrix
{\small{\begin{equation*}
\frac{{\|A- \tilde A\|}_F}{{\|A\|}_F} = 0.36\%,~
\frac{{\|B-  \tilde B\|}_F}{{\|B\|}_F} = 1.08\%,~
\frac{{\|C- \tilde  C\|}_F}{{\|C\|}_F} = 19.79\%,~
\frac{{\|D- \tilde D\|}_F}{{\|D\|}_F} = 197.13\%.
\end{equation*}}}

Allowing perturbations in $E$, we obtain with FGM a nearby PR system
with the approximation error $0.1812$ where weights are all equal to one (note that, as expected, it is smaller than for the more constrained standard case with error $0.4411$). Figure~\ref{fig:ex1} (left) displays the evolution of the objective function; FGM and the gradient method have a similar behavior as in the standard case.
%similarly as for standard systems, FGM converges in about half a second while the gradient method requires about 5 seconds.
The relative errors are
{\small{\begin{align*}
\frac{{\|E- \hat E\|}_F}{{\|E\|}_F} &= 11.49\%,~
\frac{{\|A- \hat A\|}_F}{{\|A\|}_F} = 0.19\%,~
\frac{{\|B- \hat B\|}_F}{{\|B\|}_F} = 1.81\%, ~\\
&\frac{{\|C- \hat C\|}_F}{{\|C\|}_F} = 3.32\%,~
\frac{{\|D- \hat D\|}_F}{{\|D\|}_F} = 105.24\%.
\end{align*}}}
Similarly, choosing the weights $w_1=7/4,~w_2= 7/4,~w_3= 1/4,~w_4= 1/4$ and $w_5=1$,
we obtain an objective function value of $0.07$ and the relative errors are the following
{\small{\begin{align*}
\frac{{\|E- \tilde E\|}_F}{{\|E\|}_F} &= 6.81\%,~
\frac{{\|A- \tilde A\|}_F}{{\|A\|}_F} = 0.06\%,~
\frac{{\|B- \tilde B\|}_F}{{\|B\|}_F} = 0.43\%, ~\\
&\frac{{\|C- \tilde C\|}_F}{{\|C\|}_F} = 6.05\%,~
\frac{{\|D- \tilde D\|}_F}{{\|D\|}_F} = 131.64\%.
\end{align*}}}

%In Figure~\ref{fig:ex1}, we show the evolution of the objective function with respect to FGM and the gradient method.

\begin{figure*}[h!]
\centering
\includegraphics[width=\textwidth]{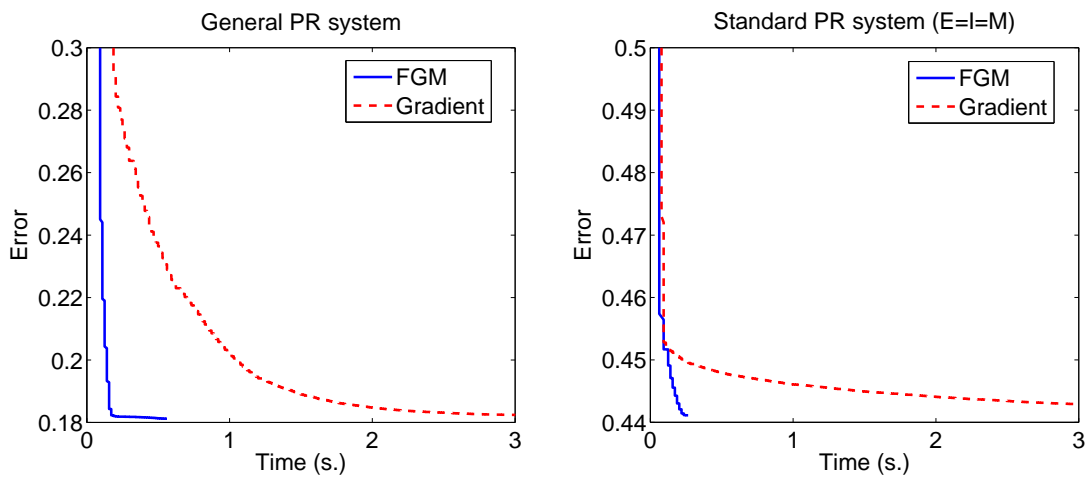}
\caption{Evolution of the objective function for FGM and GM for the system in Section~\ref{kresex}, using the standard initialization. \label{fig:ex1}}
\end{figure*}

For this example, the LMI-based initializations from Section~\ref{LMIinit} perform worse and lead to solutions with  larger error (see Table~\ref{finerrrand} that summarizes all the results).
The reason is that the original system is far from being in PH-form since $\lambda_{\min}(D) = -0.15$;
see the experiments in Sections~\ref{msdsec} and~\ref{randtest} for more insight on these initializations.

%For example, if we replace the entry $-0.15$ in $D$ by $0.15$,
%then the standard initialization provides an nearby system with error $0.04$ ($0.19$ for a nearby standard system),
%while the error for the LMI-based initializations is $0.07$ ($0.27$ for a nearby standard system).

\subsection{ Descriptor system from~\cite{WanZKPW10} } \label{schro}

Consider the LTI system $(E,A,B,C,D)$ from~\cite{WanZKPW10} (see also \cite{BruS13})
where
{\small{\begin{align}
E &= \mat{cccc}
 16 &  12 &  -4 &  14 \\
 14 &   8 &   4 &  -14 \\
 -14 &   8 &  -4 &  34 \\
  6 &  -4 &   0 &  -10   \rix,
A = \mat{cccc}
 6  &  -19  &  7  &  -9  \\
 11  &  3  &  -21  &  18  \\
 25  &  -9  &  35  &  -16  \\
 -27  &  6  &  -16  &  38  \rix, \label{example2} \\
B &= \mat{cccc}  -0.6 & 1.0 & 0.2 & -0.3 \rix^T,
C = \mat{cccc} 3.2 &  1.4&    2.6&    1.4 \rix,~D = 0.105. \nonumber
\end{align}}}
The matrix pair $(E,A)$ is of index two with two finite eigenvalues $-0.5 \pm \sqrt{2} j$ hence it is not admissible.
This system is stable and remains stable if $E$ and $A$ are not perturbed.
However, it is highly sensitive to small perturbation in $E$ and $A$ because the matrix pair $(E,A)$ has Jordan block at $\infty$;
see, e.g.,~\cite{ByeN93}.
For example, replacing $E$ with $E+10^{-6}I_n$ makes the pencil $(E,A)$ unstable with an eigenvalue at 8002.
%PH systems are admissible
%In our definition of stability in Section~\ref{admis}, we have excluded such systems with index two or more from being stable.
%However,
%According to the definition in~\cite{WanZKPW10} that only depends on the finite eigenvalues, this system is stable (and remains stable if $E$ and $A$ are not perturbed).
%In, stable systems are requires to be of index at most one and it is shown that $(E,A)$ admitting a PH form are
In~\cite{WanZKPW10}, $C$ is perturbed to $\hat C = [3.0876\,~ 1.4736\,~ 2.6\,~ 1.4]^T$ with ${\|C-\hat C\|}_F^2 =  0.018$ to make the
system passive.

%This system is highly sensitive with respect to the perturbations in $E$ and $A$.
%The index of $(E,A)$ is two hence has two eigenvalues at infinity.
%Arbitrarily small perturbations make the system highly non stable; for example,
%replacing $E$ with $E+10^{-6}I_n$ makes the pencil $(E,A)$ have 8002 as an an eigenvalue.
%In~\cite{WanZKPW10}, $C$ is perturbed to $\hat C = (3.0876, 1.4736, 2.6, 1.4)$ with $||C-\hat C||_F^2 =  0.02$ to make the system passive, although it is highly sensitive to perturbations in $(E,A)$ because $(E,A)$ has index two.

FGM with the standard initialization obtains a nearby PR system $(\hat E,\hat A,\hat B,\hat C,\hat D)$ with approximation error 1.28 in two seconds, where the relative errors in the different matrices are
{\small{\begin{align*}
\frac{{\|E- \hat E\|}_F}{{\|E\|}_F} &=  1.26\%,
\frac{{\|A- \hat A\|}_F}{{\|A\|}_F} =  0.53\%,
\frac{{\|B- \hat B\|}_F}{{\|B\|}_F} =  24.59\%,\\
&\frac{{\|C- \hat C\|}_F}{{\|C\|}_F} =  5.04\%,
\frac{{\|D- \hat D\|}_F}{{\|D\|}_F} =  703.69\%.
\end{align*}}}
This error is not comparable to the one obtained by~\cite{WanZKPW10} because FGM provides a PH system for which $(\hat E,\hat A)$ is admissible with 4 finite eigenvalues (namely $-1.98 \pm 9.06j$, $-0.50 \pm 1.42j$).
%see~\cite{GilMS17} for the properties of dissipative Hamiltonian matrix pairs.

The initialization `LMIs + solve' provides a worse but reasonable solution with error 12.47 (note that  ${\|E\|}_F^2+{\|A\|}_F^2+{\|B\|}_F^2+{\|C\|}_F^2+{\|D\|}_F^2 = 8764$), while FGM with initialization
`LMIs + formula' performs very badly with error larger than $10^6$.
The reason is that the optimal solution $X^*$ of~\eqref{initLMIs}, which is the initial value for $Q$,
is ill-conditioned (condition number of $3.4$ $10^5$).
Note that the original system is far from being in PH form since $\delta^*=2.78$ in~\eqref{initLMIs}. \\

Let us replace $E$ with  $I_4$ for which the system is not stable because we have $\max_i \text{Re} \lambda_i (E,A) = 67.6$.
This will illustrate the fact that the different initializations may perform rather differently compared to the previous example.
The LMI-based initializations provide a solution with error 2.05,
while the standard initialization provides a solution with error 263.79
(note that ${\|E\|}_F^2+{\|A\|}_F^2+{\|B\|}_F^2+{\|C\|}_F^2+{\|D\|}_F^2 = 6102$).
In this case, although the initial system is far from being stable, the LMI-based initializations perform very well
(note that $\delta^*=0.4705$ is smaller than in the previous case).

%It would be interesting to develop, if possible, some theoretical insights to understand the behavior of the LMI-based initializations; this is a topic for further research.

\subsection{ Mass-spring-damper system  } \label{msdsec}

Let us consider the following system: $(E,A)$ is generated as in~\cite[Section~5.3]{GilMS17}, that is,
{\small{\begin{equation}\label{eq:msd}
E=\mat{cc} V &0\\0&I_p\rix,
A=(J-R)Q,
J=\mat{cc}0&I_p\\-I_p &0\rix,
R=\mat{cc}W&0\\0&0\rix,
Q=\mat{cc}I_p&0\\0& H\rix,
\end{equation}}}
\noindent where $V\succ0$, $W\succ 0$, and $H\succ 0$ are respectively  mass, damping and
stiffness matrices of a mass-spring-damper (MSD) system.
The entries of $B \in \R^{2p,m}$  are generated using the uniform distribution in the interval $[0,1]$.
Generating each entry of $L \in \R^{m,m/2}$ using the normal distribution (mean 0, standard deviation 1), we set
$D = LL^T \succeq 0 \in \R^{m,m}$ which is rank deficient, and $C=B^TQ$.
This system clearly admits a PH-form and therefore is PR (Theorem~\ref{cor1}).
To make this system non PR, we perturb $R$ by $\tilde R=R+\Delta_R$ as in \cite{GilMS17} with
\[
\Delta_R
=
\mat{cc}0&0\\0&-\epsilon I_p\rix,
\]
for some $\epsilon > 0$.
For the numerical experiment, we take such systems of size
$n = 2p = 20$ and $m=4$, and $n=2p=40$ and $m=6$.  We use
$\epsilon = 2/(k \, n)$ for $n=20$ and $\epsilon = 1/(k \, n)$ for $n=40$, with $k=1,2,3,4$.

As shown in Table~\ref{deltastab1},
the corresponding perturbed systems do not admit a solution to the LMIs~\eqref{eq:LMI1} or, equivalently, $\delta^* > 0$ in~\eqref{initLMIs}.
However, as $\epsilon$ decreases, the system gets closer to a system admitting a PH-form in the sense that $\delta^*$ decreases.
Moreover, for smaller values of $\epsilon$, the pair $(E,A)$ is asymptotically stable;
see Table~\ref{deltastab1}.
\begin{center}
 \begin{table}[h!]
 \begin{center}
 \begin{tabular}{|c||c|c|c|c|}
 \hline
$n = 20$, $m = 4$                 &  $\epsilon = 2/n$ & $\epsilon = 1/n$ & $\epsilon = 3/(2n)$ & $\epsilon = 1/(2n)$   \\ \hline
$\delta^*$  of~\eqref{initLMIs}   &    5.4269         &   2.3647         & 1.7452            & 0.1295 \\
$\max_i \text{Re} \lambda_i(E,A)$ &    2.5749         &   0.7514         & 0.0760            & $-0.0031$ \\ \hline \hline
$n = 40$, $m = 6$   &  $\epsilon = 1/n$ & $\epsilon = 1/(2n)$ & $\epsilon = 1/(3n)$ & $\epsilon = 1/(4n)$  \\ \hline
$\delta^*$  of~\eqref{initLMIs}   &      7.7355       &  0.6577  &   0.2488  & 0.1304   \\ \hline
$\max_i \text{Re} \lambda_i(E,A)$ &      0.9004       & -0.0007  &  -0.0007  & -0.0007   \\ \hline
\end{tabular}
\caption{ Optimal value $\delta^*$ of~\eqref{initLMIs},
and largest real part of the eigenvalues of the pair $(E,A)$ for the different perturbed MSD systems.
\label{deltastab1}}
 \end{center}
 \end{table}
 \end{center}

\hspace{-.3cm} We compare four different initializations: the standard initialization (Section~\ref{staninit}), `LMIs + formula' and `LMIs + solve' (Section~\ref{LMIinit}), and the initialization using the unperturbed system, that is, taking $(J,R,Q)$ as in~\eqref{eq:msd}, $F=B$, $P=0$, $S=D$,
and $N=0$. We will refer to this last initialization as the `true' initialization as it corresponds to the groundtruth unperturbed PH system.

Figure~\ref{fig:MSDn20} (resp.\@ Figure~\ref{fig:MSDn40})
displays the evolution of the objective function values using these different initializations
for $n=20$ and $m=4$ (resp.\@ $n=40$ and $m=6$) for the different values of $\epsilon$.
The weights in the objective function are all equal to one.
\begin{figure*}[h!]
\centering
\begin{tabular}{cc}
\includegraphics[width=0.45\textwidth]{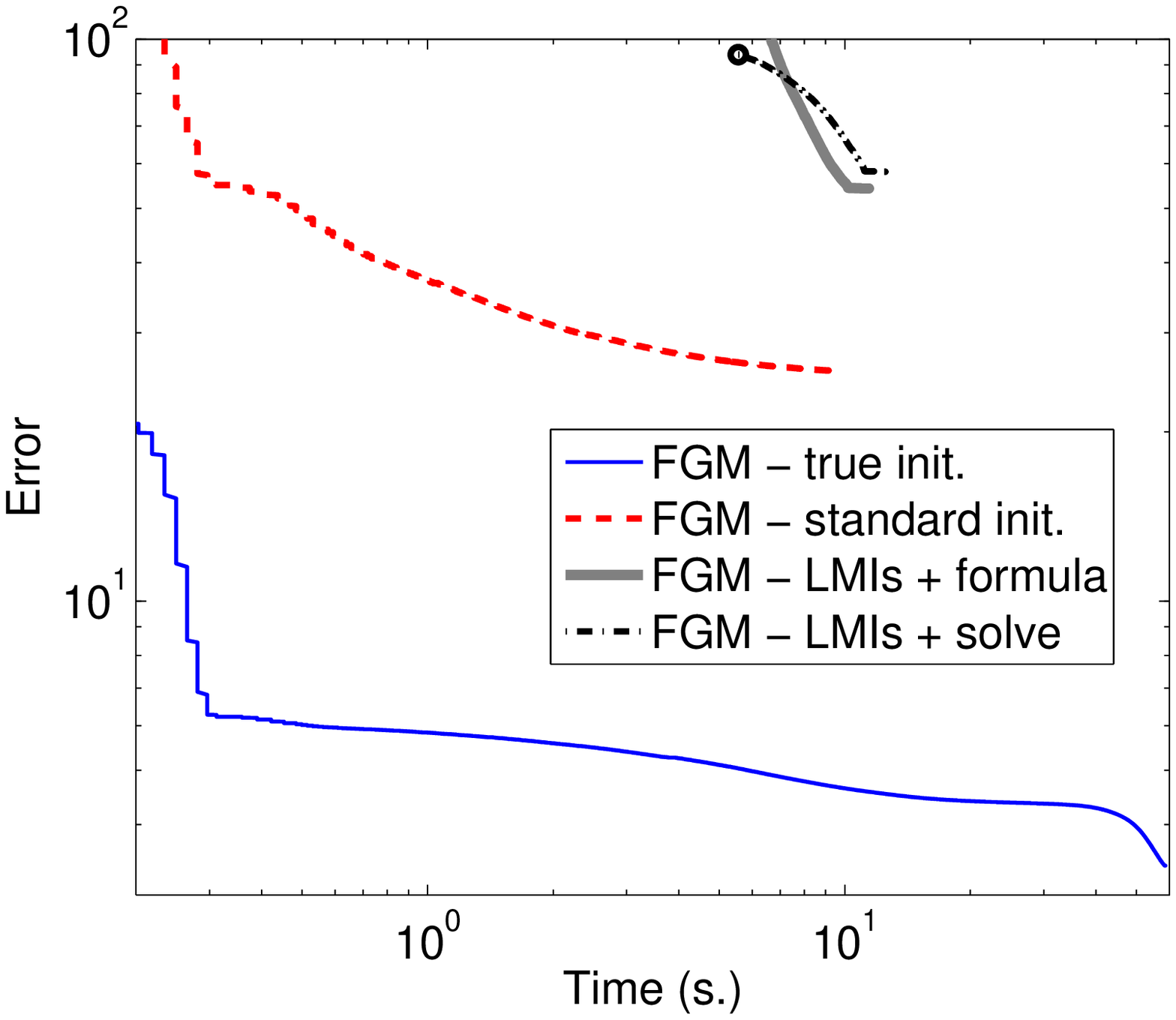} &
\includegraphics[width=0.45\textwidth]{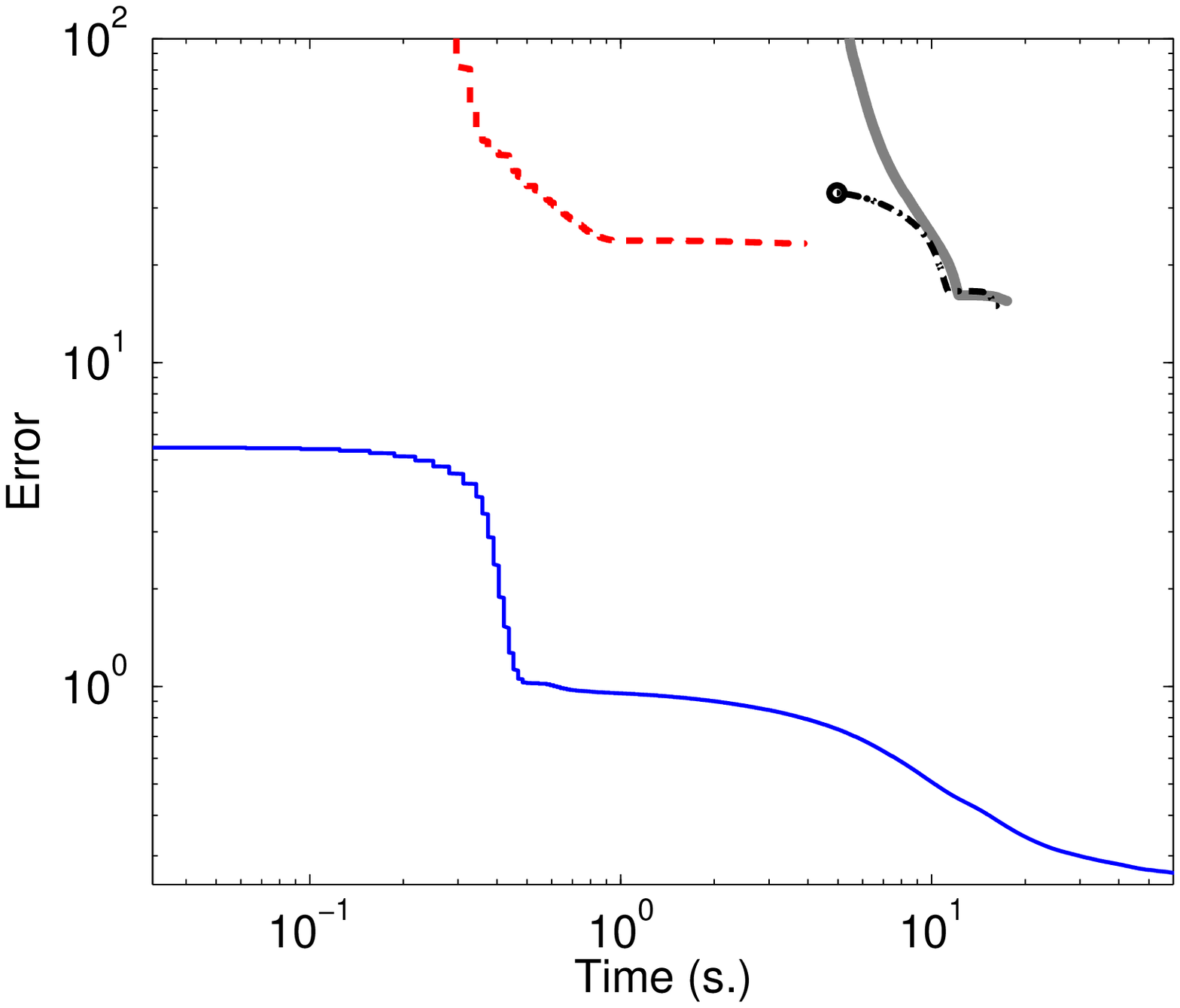} \\
\includegraphics[width=0.45\textwidth]{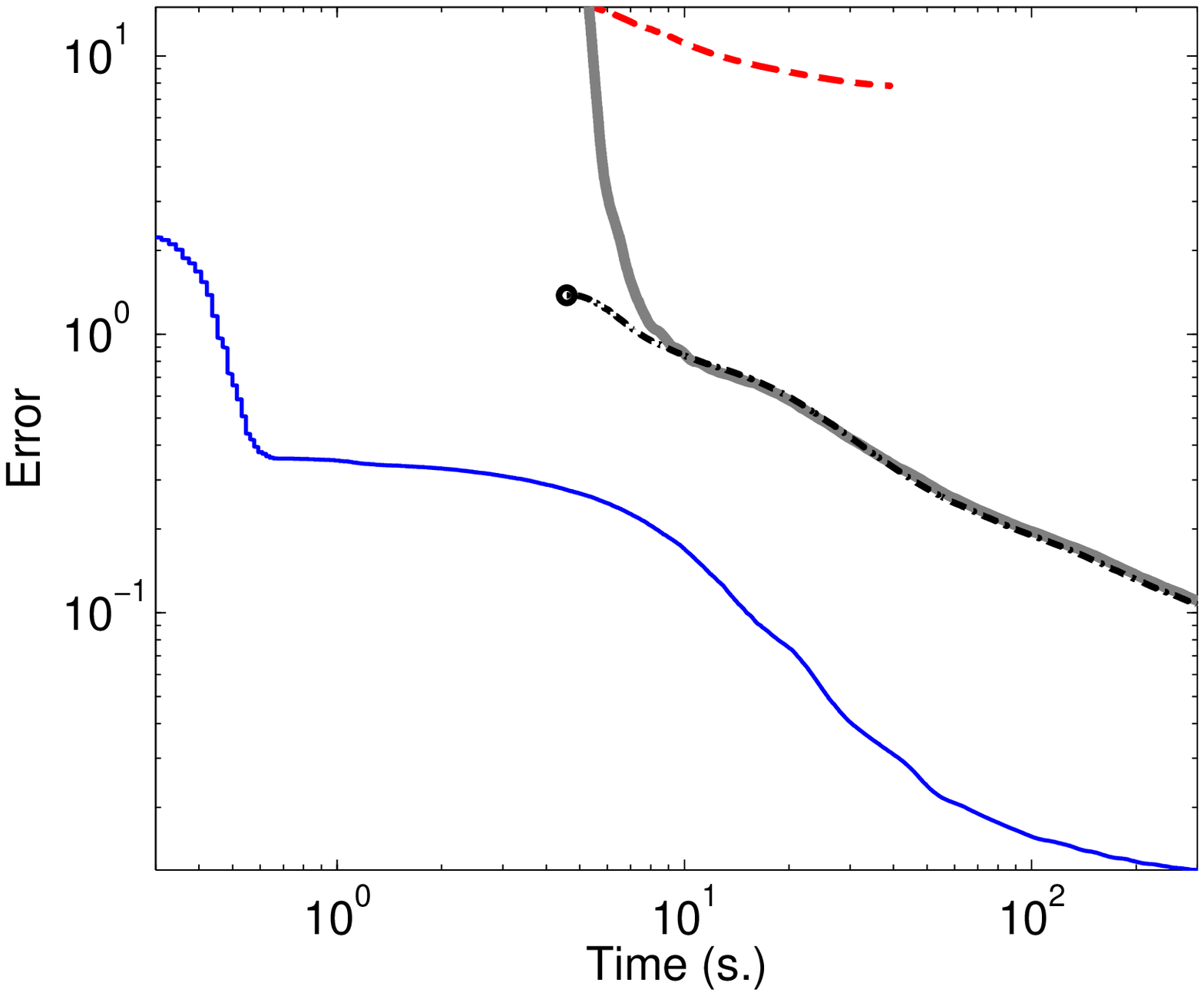} &
\includegraphics[width=0.45\textwidth]{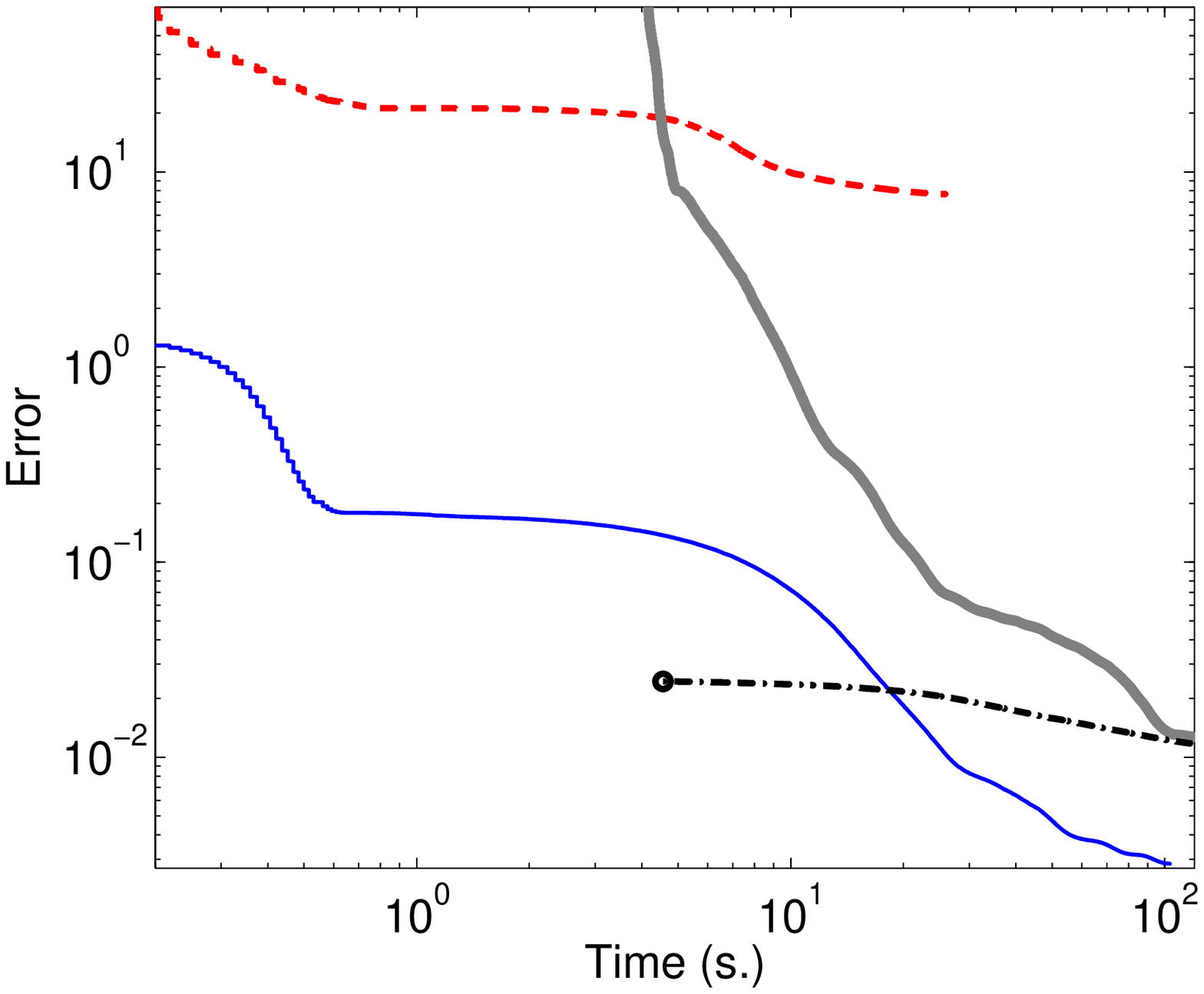} \\
\end{tabular}
\caption{Evolution of the objective function for FGM  with the different initializations
for the perturbed MSD system with $n=20$, $m=4$ and $\epsilon = 2/(nk)$, with
$k = 1$ (top left), $k = 2$ (top right), $k = 3$ (bottom left), and $k = 4$ (bottom right). \label{fig:MSDn20}}
\end{figure*}
\begin{figure*}[h!]
\centering
\begin{tabular}{cc}
\includegraphics[width=0.45\textwidth]{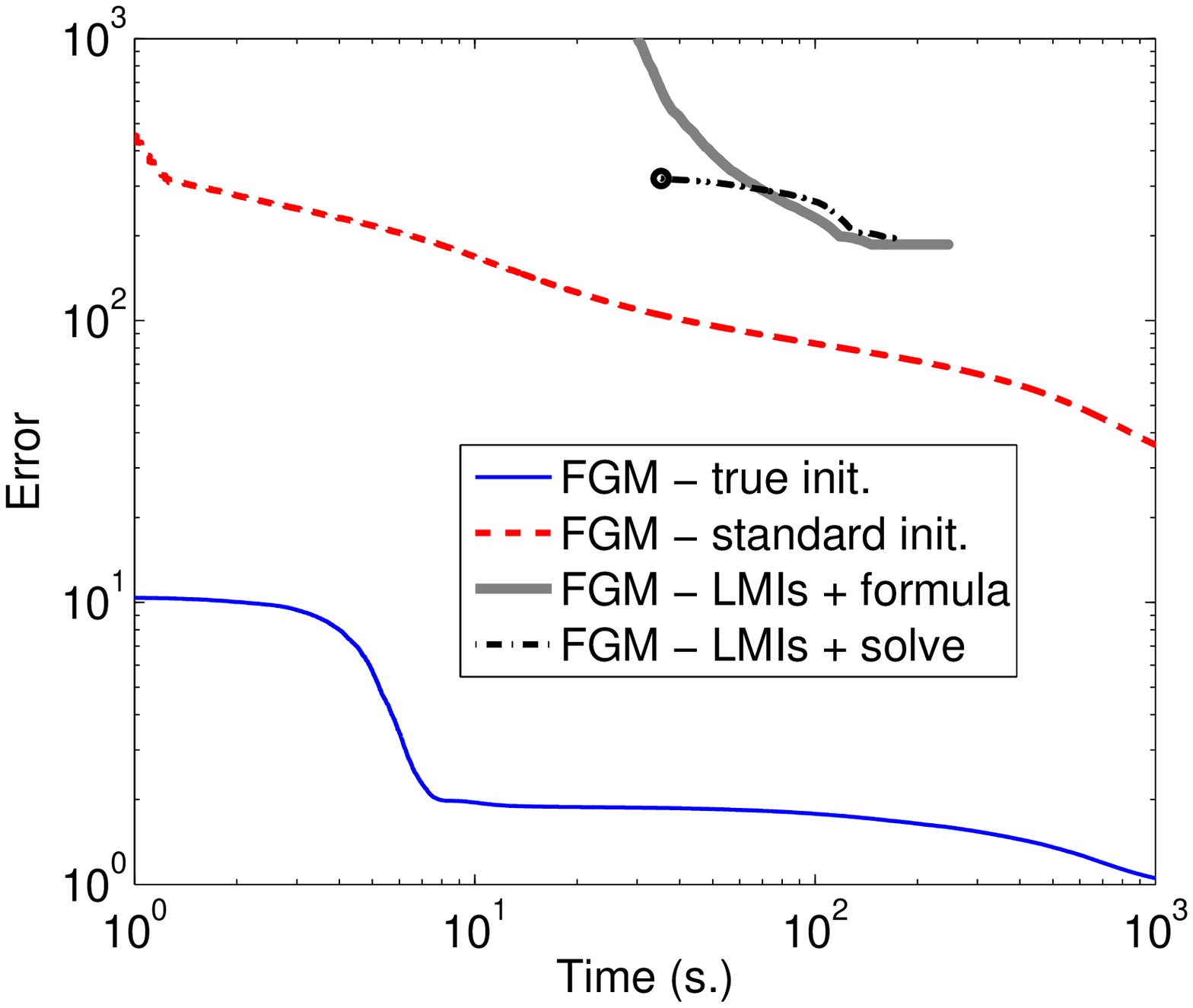} &
\includegraphics[width=0.45\textwidth]{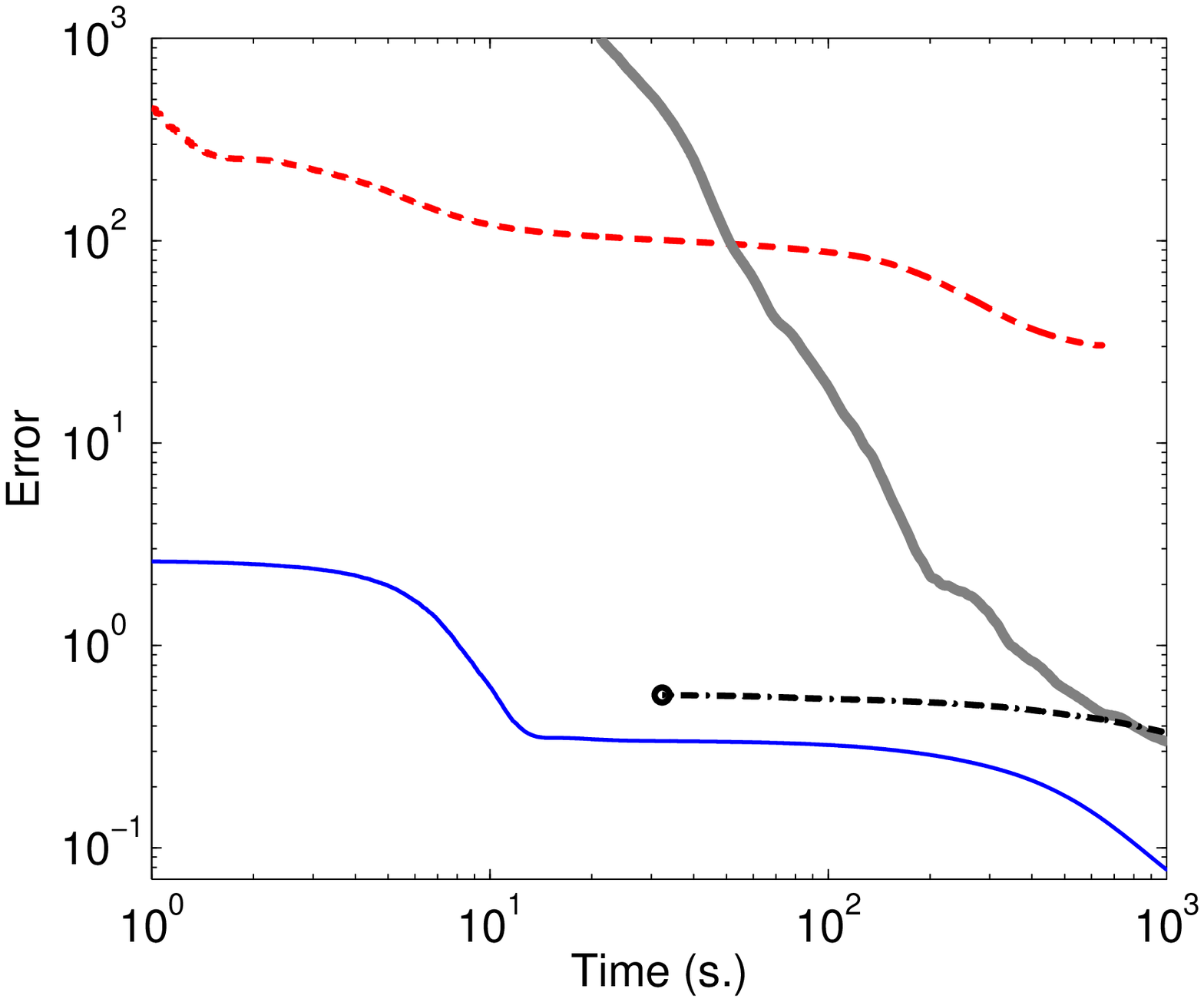} \\
\includegraphics[width=0.45\textwidth]{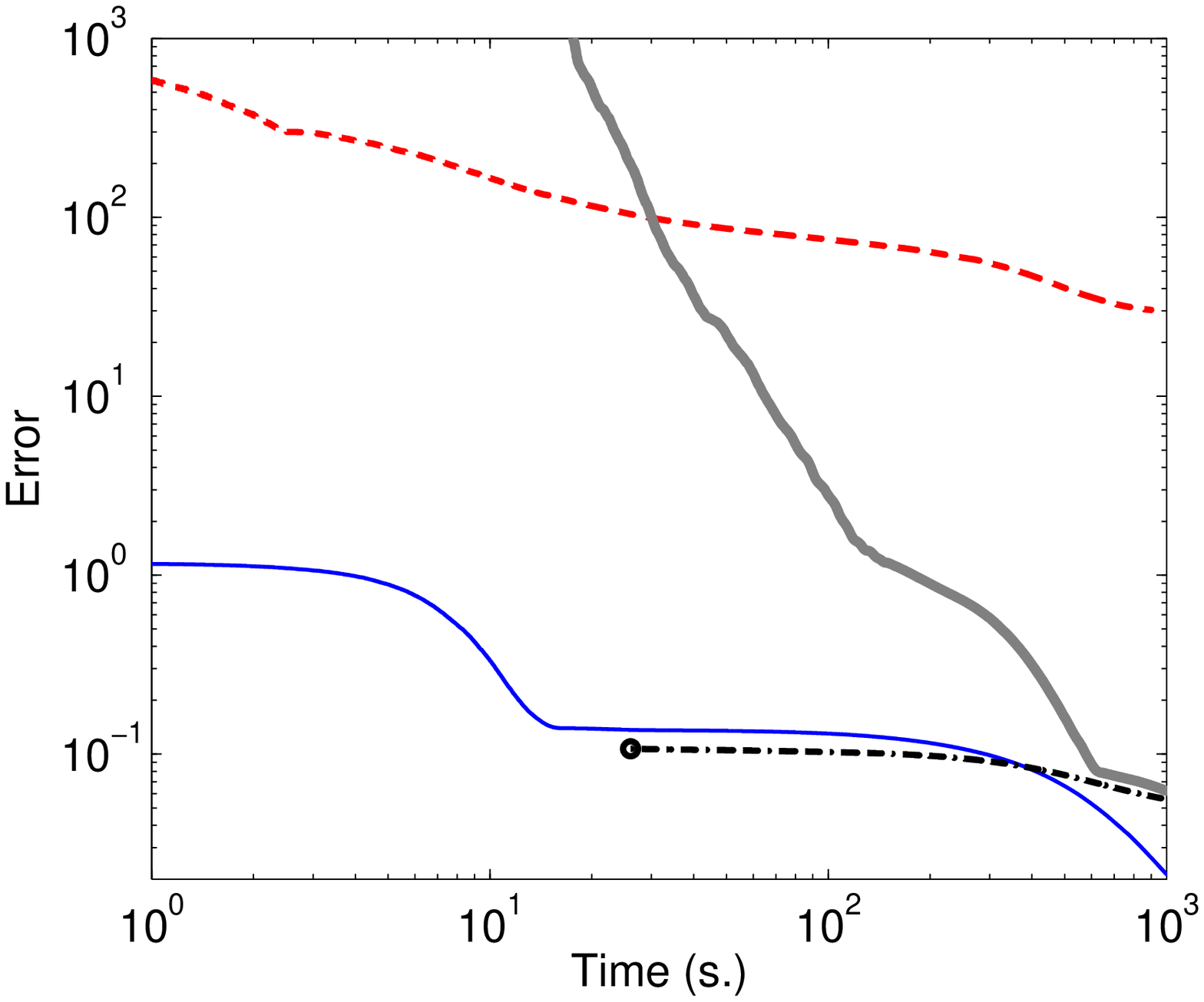} &
\includegraphics[width=0.45\textwidth]{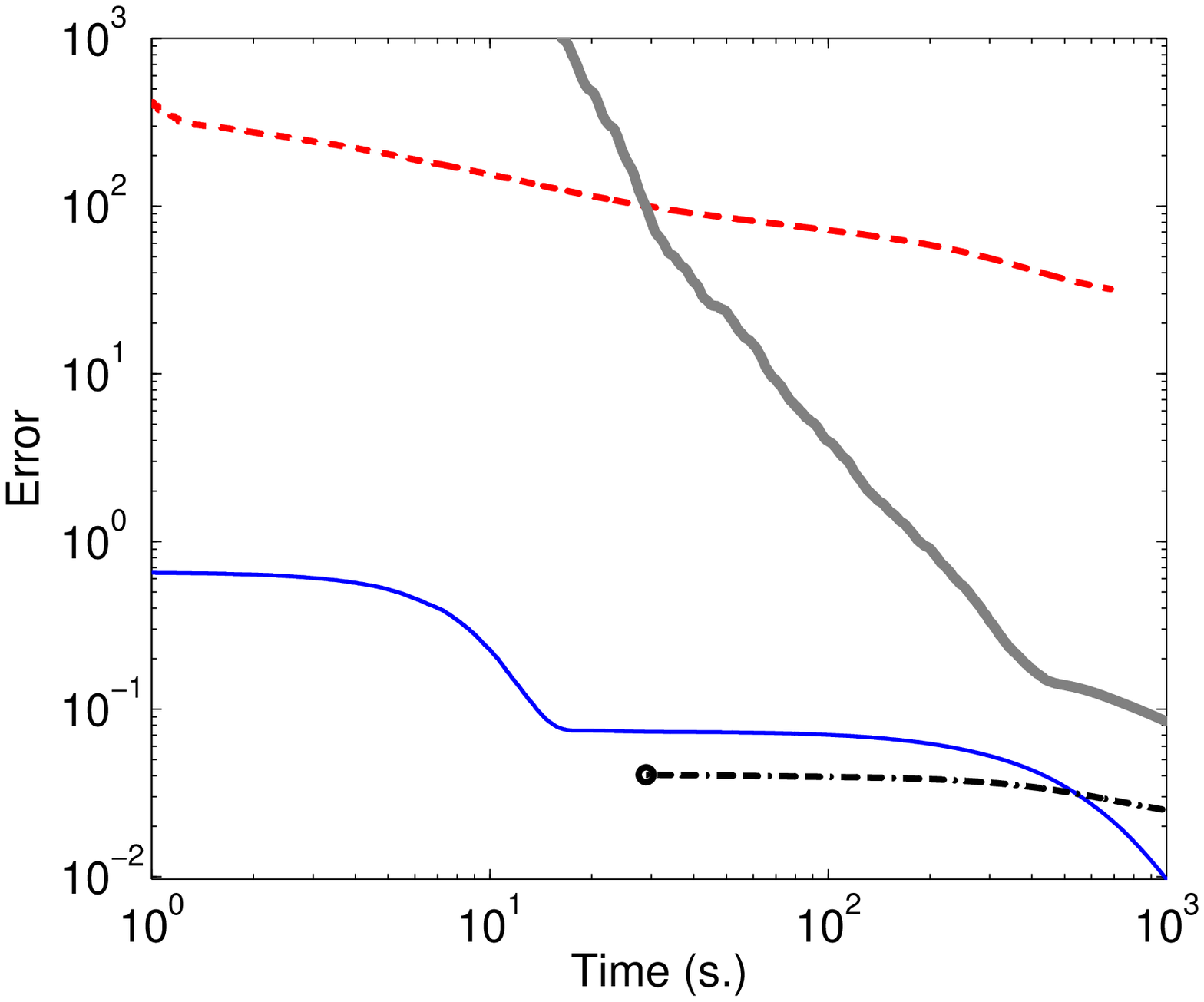} \\
\end{tabular}
\caption{Evolution of the objective function for FGM  with the different initializations
for the perturbed MSD system with $n=40$, $m=6$ and $\epsilon = 1/(nk)$, with
$k = 1$ (top left), $k = 2$ (top right), $k = 3$ (bottom left), and $k = 4$ (bottom right).\label{fig:MSDn40}}
\end{figure*}

Table~\ref{finerrrand} gives the final error obtain by the different algorithms with a maximum time limit of 300 seconds for $n=20$ and 1000 seconds for $n=40$. Before we comment these results, it is important to put these numbers into perspective:
we have ${\|E\|}_F^2+{\|A\|}_F^2+{\|B\|}_F^2+{\|C\|}_F^2+{\|D\|}_F^2 = 5456$
(resp.\@ $= 43078$)  for $n = 20$ (resp.\@ $n=40$).
Hence, for example, the largest error of $58.00$ (resp.\@ $196.11$) of `LMIs + solve' for $n=20$ (resp.\@ $n=40$) and $k=1$ corresponds to a reasonable approximation although it is much larger than for some other approaches.
%small relative approximation error of $1.06\%$ (resp.\@ $0.46\%$).
%
%\begin{center}
% \begin{table}[h!]
% \begin{center}
% \begin{tabular}{|c||c|c|c|c|}
% \hline             &   standard init. &  true init. & LMIs + formula & LMIs + solve   \\
% \hline
%$n = 20$, $k = 1$   &   25.69    &  3.38   &  54.22  & 58.00  \\
%$n = 20$, $k = 2$   &    23.31   &    0.25 &  15.50  & 14.97  \\
%$n = 20$, $k = 3$   &  7.82     &  0.01    &  0.11  &   0.11 \\
%$n = 20$, $k = 4$   &   7.68   &  2.85  $10^{-3}$   & 9.21  $10^{-3}$   & 8.96 $10^{-3}$   \\
%\hline
%$n = 40$, $k = 1$   &  36.26     &   1.05  &   185.93  &   196.11  \\
%$n = 40$, $k = 2$   &  30.42     &   0.08  &   0.33    &    0.37 \\
%$n = 40$, $k = 3$   &  30.31     &   0.02  &   0.06   &   0.06  \\
%$n = 40$, $k = 4$   &  32.05     &   0.01  &   0.09   &   0.03 \\
%\hline
%\end{tabular}
%\caption{ Final error for the different algorithms.
%\label{finerr}}
% \end{center}
% \end{table}
% \end{center}

For both dimensions, we observe a similar behavior of FGM for the different initializations:
\begin{itemize}

\item FGM converges in most cases at a sublinear rate; see Figures~\ref{fig:MSDn20} and~\ref{fig:MSDn40} where the objective function values decrease roughly linearly in a logarithmic time scale.

\item For the true initialization, FGM recovers a solution with the smallest error after sufficiently many iterations.
This is rather natural since the initialization corresponds to the original unperturbed PH system.

\item For the standard initialization, FGM converges to systems with error larger than with the true initialization, and gets stuck is some local minima.
This illustrates the importance of choosing good initial points although, as mentioned above, in terms of relative error, these solutions still provide good approximations. For high perturbations ($k=1$), it provides significantly better solutions than the LMI-based initializations.

\item For `LMIs + formula', the initial error is rather high (even for a small perturbation $\epsilon$),
because the formula~\eqref{DHformcstr} does not provide a good estimate of the PH-form when the system is not PH.

For large $\epsilon$ ($k=1$), it is not able to recover a solution close to the one obtained with the true initialization.

For $\epsilon$ sufficiently small, and after sufficiently many iterations, it is able to recover a solution with error similar
to that of FGM initialized with `LMIs + solve' and close to that obtained with the true initialization.
`LMIs + formula' and `LMIs + solve' often converge to similar solutions which can be explained
by the fact that both initializations use the same initial $Q$ and $Z$.

\item For `LMIs + solve', FGM is able to recover better and better solutions as $\epsilon$ decreases.
For the largest $\epsilon$ ($k=1$), it performs worse than the standard initialization.
For the smallest $\epsilon$ ($k=4$), the initial point obtained with `LMIs + solve'  has smaller error than the true initialization.
Since the initialization `LMIs + solve' computes the optimal values for $J,R,P,S,N$ and $F$ for fixed $Q$ (at a larger initial computational cost),
it is not surprising that it has a much lower initial error than `LMIs + formula'.

\end{itemize}

\begin{remark}\label{rem:strictPR}[Nearest strict PH system]
{\rm
The PH system obtained with FGM are not necessarily strict since the cost matrix $K$ can be rank deficient.
For example, with the true initialization for $n=20$ and $k=1$, it has 11 eigenvalue with  modulus smaller than $10^{-12}$. It is possible to impose the system to be strict (hence admissible, and ESPR if $D+D^T \succ 0$; see Theorem~\ref{thm:main_result})
using a lower bound on the eigenvalues of $K$ which does not make the projection step more complicated.
Note that it is also possible to use a lower bound $\nu$ for the eigenvalues of $Z$ to have $Q$ invertible (as long as it is initialized with an invertible matrix).
In fact, the objective function is guaranteed to decrease under the updates of FGM, hence the term ${\|E^T - ZQ^{-1}\|}_F^2$ remains bounded
which guarantees $Q$ to be invertible since we would have $Z \succeq \nu I_n$.
We have included this option in our code.
}
\end{remark}

\subsection{Random initializations} \label{randinit}

So far, we have only used deterministic initializations.
As expected, in some cases, they do not lead to good solutions (see for example Table~\ref{finerrrand}).
Therefore, an important direction for further research is to design new initialization schemes, possibly depending on the problem at hand. For example, we have seen in the previous sections that if the perturbed system is close to being passive,
then the LMI-based initializations perform well. We will confirm this behavior on randomly generated systems in the next section.
A simple initialization scheme is to use random matrices.
In this section, we perform some numerical experiments to get some insight on whether this allows to recover good solutions for the systems presented in the previous sections.

Defining a Gaussian matrix as a matrix whose entries are generated randomly using the normal distribution $N(0,1)$ (we used the function \texttt{randn} in Matlab),
we initialize the variables as follows:
$Q$ is the product of a Gaussian matrix with its transpose so that it is full rank\footnote{For $n$ large, the conditioning of $Q$ can be bad ($\gg 10^4$). In that case, we compute the SVD of $Q$, and set the smallest singular values of $Q$ to $\sigma_{\max}(Q)/\kappa$ so that the conditioning of $Q$ is a most $\kappa$ (we used $\kappa=10^4$).},
$J$, $Z$ and $\mat{cc} R & S \\  S^T & P \rix$
are Gaussian matrices projected onto the feasible set,
$F$ is chosen optimal using~\eqref{formulF}.

Table~\ref{finerrrand} summarizes the results using 100 random initializations with a time limit of 10 seconds for Algorithm~\ref{fastgrad}.
However, the comparison is not fair for the MSD systems of Section~\ref{msdsec} that were run 300 and 1000 seconds respectively for $n=20$ and $n=40$, but this would take a long time to run 100 times.
Hence, for these systems, Algorithm~\ref{fastgrad} was run for 300 and 1000 additional seconds only on the best solution obtained after 10 seconds among the 100 random initial points.
\begin{center}
 \begin{table}[h!]
 \begin{center}
 \begin{tabular}{|c||c|c|c|c|c|}
 \hline             &  random    &  standard & LMI+form. & LMI+sol.  & true  \\ \hline
 \hline
\eqref{example1}
&  \textbf{0.4411}   & \textbf{0.4411} &  4.07  &  3.26  &  /  \\
$\hat{E}=I$  &  (0.52 $\pm$ 0.35)  & &   &   &    \\ \hline
\eqref{example1}
&  \textbf{0.1812}   & \textbf{0.1812} &  2.27  & 2.22    &  /  \\
&  (3.38 $\pm$ 11.9)   & &   &   &    \\ \hline
\hline
\eqref{example2}
& \textbf{0.62}       &  1.28  & $>10^6$  &   12.47  &   / \\
&  (172 $\pm$ 164)   & &   &   &    \\ \hline
\eqref{example2}
& \textbf{1.39} &  263.79  & 2.05  & 2.05    &  /  \\
$E=I_4$   &  (63.43 $\pm$ 148)    & &   &   &    \\ \hline
 \hline
$n$=20, $k$=1
&  \textbf{1.90} & 26.69 & 54.22  &  58.00   &  3.38  \\
&  (37.57 $\pm$ 22.20)    & &   &   &    \\ \hline
$n$=20, $k$=2
&  1.48 & 23.31 &  15.50  &  14.97   &  \textbf{0.25}  \\
&  (34.52 $\pm$ 25.56)    & &   &   &    \\ \hline
$n$=20, $k$=3
&  \textbf{0.01} & 7.82 & 0.11  &   0.11  &   \textbf{0.01} \\
&  (32.54 $\pm$ 30.38)    & &   &   &    \\ \hline
$n$=20, $k$=4
&  0.92 & 7.68  &  9.21 $10^{-3}$ &  8.96 $10^{-3}$   &   \textbf{2.85 $10^{-3}$}  \\
&  (33.91 $\pm$ 24.48)    & &   &   &    \\ \hline
\hline
$n$=40, $k$=1
&  66.19 & 36.26 & 185.93  &  196.11   &  \textbf{1.05}  \\
&  (1708 $\pm$ 613)    & &   &   &    \\ \hline
$n$=40, $k$=2
& 13.38  & 30.42 & 0.33  &   0.37  &   \textbf{0.08} \\
&  (1815 $\pm$ 673)    & &   &   &    \\ \hline
$n$=40, $k$=3
& 299.65  & 30.31 & 0.06  &  0.06   &   \textbf{0.02}\\
&  (1578 $\pm$ 653)    & &   &   &    \\ \hline
$n$=40, $k$=4
&  18.32 & 32.05 &  0.09  &  0.03   &  \textbf{0.02}   \\
&  (1704 $\pm$ 603)    & &   &   &    \\ \hline
\hline
\end{tabular}
\caption{ Comparison of the error obtained by the different initialization schemes on the systems from Section~\ref{kresex}, \ref{schro} and~\ref{msdsec}.
For the 100 random initializations, we also report the mean and standard deviation in brackets (for the error obtained within 10 seconds). Bold indicates the lowest error among all initializations.
\label{finerrrand}}
 \end{center}
 \end{table}
 \end{center}

In many cases, random initialization identifies good solutions; in fact, it achieves the lowest error compared to  the three deterministic approaches for all problems of size $n \leq 20$,
except for the MSD system with $n=20$ and $k=4$.
It even competes similarly as the `true' initialisation for the MSD systems
(providing a lower error for
$n=2$ and $k = 1$, and the same error for  $n=2$ and $k = 3$).
However, for the larger MSD systems with $n = 40$,
it is not able to compete with the LMI-based initializations that perform well in this situation (for $k \geq 2$).
The reason is that the number of local minima increases and the algorithm converges in the basin of attraction of worse local minima: not surprisingly, the standard deviation of the errors obtained with random initializations increases with the dimension of the problem.
For example, for $n=40$ and $k = 3$, the error obtained is rather high, namely 299.65. However, running the algorithm with 100 other random initializations, we obtained an error of 28.07.
%This exemplifies the high variance among the error obtained by different random initializations.
%This is illustrated by the increase of the standard deviation of the error as $n$ increases.
For larger problem, a direction for further research is therefore to design more sophisticated heuristics to identify better solutions and avoid the basin of attraction of bad local minimizers.

\subsection{ Randomly generated systems  } \label{randtest}

In the previous sections,
we observed that
\textit{if the perturbed system is close to being passive, then the LMI-based initializations are able to recover a passive system that is closer to the perturbed one than the initial true passive system.}
This means that the LMI-based initializations provide an initial point that is in the basin of attraction of a very good local minimum (possibly the global minimum, although this is difficult to verify).
Intuitively, the reason is that the LMIs~\eqref{initLMIs2} are only slightly perturbed hence the solution will be close to the solution of the system for the unperturbed passive system.
This is closely related to perturbation analysis of optimization problems~\cite{bonnans2013perturbation}.
 Although we are not able to prove this important observation (which would be a very interesting direction of further research), we perform in this section additional numerical experiments to support it.

 %For simplicity, we will only compare the initialization LMIs+solve with the true initialization.
 %Data set?
 %100 random ones with increasingly larger perturbations.

To generate systems randomly, we use the same strategy as in the previous section to obtain a PH system, and set $N=0$.
Then, given the parameter $\delta$, we perturb $R$ and $S$ in the same way as follows: given a matrix $X$,
\begin{itemize}
\item Compute its singular values decompositions $[U,\Sigma,V]$ with $X = U\Sigma V^T$,

\item Set the smallest singular value in $\Sigma$ to zero (note that, in many cases, $R$ and $S$ already have a singular value equal to zero since they are the projection of Gaussian matrices onto the PSD cone).

\item Compute $\Sigma' = \Sigma - \delta \sigma_{\max}$ where $\sigma_{\max}$ is the largest entry of $\Sigma$, and replace $X$ with by $\tilde{X} = U \Sigma' V$.
\end{itemize}
With this procedure, the perturbed $R$ and $S$ do not belong to the PSD cone, for any $\delta > 0$. Since $S$ is the symmetric part of $D$, this will generate a perturbed system that does not admit a PH form.
The parameter $\delta$ is chosen such that a certain relative distance $\epsilon$ is achieved between the randomly generated PH system $(E,A,B,C,D)$ and
its perturbation $(\tilde{E},\tilde{A},\tilde{B},\tilde{C},\tilde{D})$:
\[
 \frac{{\|(A-\tilde{A},E-\tilde{E},B-\tilde{B},C-\tilde{C},D-\tilde{D})\|}_F}{ {\|(\tilde{A},\tilde{E},\tilde{B},\tilde{C},\tilde{D})\|}_F}  = \epsilon.
\]
(We used a simple bisection scheme to find $\delta$, given $\epsilon$.)
We will compare the three deterministic initialization schemes as in the previous sections (standard, LMI+formula, LMI+solve) along with the `true' initialization which is  the original unperturbed randomly generated PH system. Hence, the `true' initialization is guarantees to achieve a relative error smaller $\epsilon$, since Algorithm~\ref{fastgrad} is guaranteed to decrease the objective function at each step.

We generate systems with $n=20$ and $m=5$ as described above, and Table~\ref{finrandtest} summarizes the average relative error in percent among 10 such randomly generated systems, with a time limit of 100 seconds, for different values of $\epsilon$.

The standard initialization scheme consistently performs worse than the other approaches.
LMIs+formula performs rather well, similarly as the `true' initialization,
while LMIs+solve surprisingly performs best in all scenarios, being able to identify better solutions than the `true' initialization.

This experiment allows us to confirm our previous observation: as the perturbed system gets closer to a PH system, LMI-based initialization are able to recover better and better solutions.
\begin{center}
 \begin{table}[h!]
 \begin{center}
 \begin{tabular}{|c||c|c|c|c|}
 \hline
 $\epsilon$   &  standard init.   &  LMIs + formula  &  LMIs + solve &    true init.   \\
 \hline
   50\%   &  17.95 &  11.21 &   2.26   &  13.38 \\
   10\%   &  12.37 &   0.89 &   0.24   &   0.87 \\
    1\%   &  12.60 &   0.52 &   0.0071 &   0.21 \\
    0.1\% &  15.89 &   0.19 &   0.0028 &   0.035 \\
\hline
\end{tabular}
\caption{ Average relative error in percent for the different initialization schemes for randomly generated systems of size $n=20$ and $m=5$.
\label{finrandtest}}
 \end{center}
 \end{table}
 \end{center}

\section{Conclusion and further research}

In this paper, we have proposed the first algorithm to tackle the nearest positive-real system problem that allows the perturbation of all matrices $(E,A,B,C,D)$ describing an LTI system.
Our approach combines a reformulation of positive-real systems as port-Hamiltonian (PH) systems and a fast gradient method (FGM).
We have illustrated the effectiveness of our approach on several examples.
In particular, we observed that {if the initial system is close to being PH, then the proposed LMI-based initializations allow to recover nearby PH systems}.
An interesting direction for further research would be to characterize this rigorously, e.g., providing error bounds for the LMIs-based initializations.
Another observation is that FGM is sensitive to initialization and does not always converge fast (often in sublinear rate),
hence further research includes for example the design of
(i)~new initialization schemes,
(ii)~more efficient algorithms (e.g., using second-order information), and
(iii)~globalization approaches to escape local minima.

\section*{Acknowledgement}

The authors thank the anonymous reviewers for their insightful comments which helped improve the paper.

\small

\bibliographystyle{siam}
\bibliography{GilS}

\end{document}